	\numberwithin{equation}{section}
\def\R{{\mathbb R}}
\def\N{{\mathbb N}}
\def\Z{{\mathbb Z}}
\def\d{\partial}
\def\a{\alpha}
\def\dsp{\displaystyle}
\def\be{\begin{equation}}
\def\ee{\end{equation}}
\def\op{{\rm op}}
\def\und{\underline}
\begin{document}

\theoremstyle{plain}
\newtheorem{theo}{Theorem}[section]

\theoremstyle{plain}
\newtheorem*{theo*}{Theorem}

\theoremstyle{plain} 
\newtheorem{defi}{Definition}[section]

\theoremstyle{plain}
\newtheorem{prop}[theo]{Proposition}

\theoremstyle{plain}
\newtheorem{coro}[theo]{Corollary}

\theoremstyle{plain}
\newtheorem{lemma}[theo]{Lemma}

\theoremstyle{definition}
\newtheorem*{condi}{Condition}

\theoremstyle{definition}
\newtheorem*{notation}{Notation}

\theoremstyle{plain} 
\newtheorem{remark}{Remark}

\theoremstyle{plain} 
\newtheorem{hypo}{Hypothesis}

\title{On the action of pseudo-differential operators on Gevrey spaces}

\author{Baptiste Morisse \thanks{School of Mathematics, Cardiff University - \href{mailto:morisseb@cardiff.ac.uk}{morisseb@cardiff.ac.uk}. The author is supported by the EPSRC grant "Quantitative Estimates in Spectral Theory and Their Complexity" (EP/N020154/1). 
The author thanks his PhD advisor Benjamin Texier for all the remarks on this work, and Nicolas Lerner for interesting discussions.}
}

\date{\today}

\maketitle

\begin{abstract}
	In this paper we study the action of pseudo-differential operators acting on Gevrey spaces. We introduce classes of classical symbols with spatial Gevrey regularity. As the spatial Gevrey regularity of a symbol $p(\cdot,\xi)$ may depend on the frequency $\xi$, the action of the associated pseudo-differential operator $\op(p)$ may induce a loss of regularity. The proof is based on a para-product decomposition.  
\end{abstract}


\setcounter{tocdepth}{2}
\tableofcontents



\section{Introduction}


This paper is devoted to the study of a class of pseudo-differential operators acting in Gevrey spaces. We assume that the operators have symbols which are Gevrey regular in the spatial variable $x\in\R^{d}$ and satisfy estimates in $(x,\xi)$ derivatives which are analogous to the ones enjoyed by symbols of the classical classes $S^{m}_{\rho,\delta}$. These symbols are precisely defined in Section \ref{3.section.class}. We give three results:
\begin{itemize}
	\item The first, Proposition \ref{3.prop.DmFtau}, states that if a function $F$ belongs to $H^{m}{\rm \mathcal{G}}^{\sigma}_{\tau}$ (a
Gevrey space with Sobolev correction, defined in Section \ref{3.section.class}), then the operator $e^{\tau D^{\sigma}} F e^{-\tau D^{\sigma}}$ acts continuously in $H^{m}(\R^{d})$. This result appeared in slightly different form in the article \cite{bedrossian2013landau} by Bedrossian, Masmoudi and Mouhot (see Lemma 3.3 therein). Its proof relies on a para-product decomposition and precise triangle-like inequalities
in the spirit of \cite{bedrossian2013landau}. 
	\item Our second and main result, Theorem \ref{3.theo.action.delta}, describes the action of operators with symbols in ${\rm S}^{0}_{\rho,\delta}{\rm G}^{s}_{R} $
(classical symbols with Gevrey regularity, defined in Section \ref{3.section.class}) on Gevrey spaces. The proof relies again on a para-product
decomposition.
	\item The third result is Lemma \ref{3.lemma.conjugation}. Here we give precise bounds for the symbol
of $e^{\tau D^{\sigma}} {\rm op}(p) e^{-\tau D^{\sigma}}$, where $p$ belongs to ${\rm S}^{0}_{\rho,0}{\rm G}^{s}_{R} $. This completes Lemma 7.1 of the article \cite{bronshtein} by Colombini, Nishitani and Rauch.

\end{itemize}

A classical reference on Gevrey spaces is Rodino's book \cite{rodino1993linear}. See also the paper \cite{hua2001paradifferential} by Hua and Rodino, where slightly less general classes of symbols are studied. Questions about the action of pseudo-differential operators in Gevrey spaces naturally arise from the study of the Gevrey well-posedness of the Cauchy problem for first-order systems. The aforementioned article \cite{bronshtein} focuses on Gevrey well-posedness, and so does our own line of research \cite{morisse2016I}, \cite{morisse2016II}, \cite{morisse2016lemma} and \cite{morisse2016IIz}.


\section{Classes of Gevrey regular symbols}
\label{3.section.class}



\subsection{Gevrey spaces}


We start by two definitions of Gevrey spaces, one on the spatial side, the other on the Fourier side.

\begin{defi}[Gevrey spaces: the spatial viewpoint]
	\label{3.defi.gevrey.x}
	For any $s\in[1,\infty)$, we define $G^{s}_{R}$ to be the space of smooth functions $f$ such that, for any compact set $K$ of $\R^{d}$, there are two positive constants $C_{K}$ and $R_{K}$ for which there holds
	\be
		\label{3.gevrey.x.ineq}
		|\d_{x}^{\a} f|_{L^{\infty}(K)} \leq C_{K} R_{K}^{|\a|} |\a|!^{s} \quad , \quad \forall \,\a\in\N^{d} .
	\ee 
	
	\noindent We call $s$ the Gevrey (regularity) index, and $R_{K}^{-1}$ the Gevrey radius.
	
	For $B$ a compact set of $\R^{d}$ and $R>0$ being fixed, we define $G^{s}_{R}(B)$ the space of smooth functions $f$ compactly supported on $B$ and being in $G^{s}$. That is, there is a positive constant $C>0$ for which there holds
	\be
		\label{3.gevrey.x.ineq.B}
		|\d_{x}^{\a} f|_{L^{\infty}(B)} \leq C R^{|\a|} |\a|!^{s} \quad , \quad \forall \,\a\in\N^{d} .
	\ee 
	
	\noindent The space $G^{s}_{R}(B)$ can be associated with the norm defined by 
	\be
		| f |_{s,R} = \sup_{\a \in \N^{d}} |\d_{x}^{\a} f|_{L^{\infty}(B)} \left( R^{|\a|} |\a|!^{s} \right)^{-1} .
	\ee
\end{defi}

We define
\be
	\langle \xi \rangle = (1 + |\xi|^2)^{1/2} \quad , \quad \forall\,\xi\in\R^{d} .
\ee

\begin{defi}[Gevrey spaces: the Fourier viewpoint]
	\label{3.defi.gevrey.xi}
	For any $\sigma \in(0,1]$ and $\tau >0$, we define $\mathcal{G}^{\sigma}_{\tau}$ to be the space of functions $f\in L^2$ such that $\exp( \tau \langle\cdot\rangle^{\sigma}) \hat{f}$ is in $L^2$. The associated norm is defined by
	\be
		\left| f \right|_{\sigma,\tau} = \left| e^{ \tau \langle \cdot \rangle^{\sigma}} \widehat{f} \,\right|_{L^2} .
	\ee
	
	We call $\tau$ the Gevrey radius and $\sigma$ the Gevrey (regularity) index.
\end{defi}

Both previous definitions of Gevrey functions are linked, as shown by the following classical result (see \cite{rodino1993linear}):
\begin{prop}[$G^{s}_{R}(B)$ is included in $\mathcal{G}^{1/s}_{\tau}$]
	\label{3.prop.x.in.xi}
	For any compact set $B$ of $\R^{d}$, the space $G^{s}_{R}(B)$ is included in the space $\mathcal{G}^{\sigma}_{\tau}$ for $\sigma = 1/s$ and $\tau < sR^{-1/s}$.
	Moreover there holds
	$$
		|u|_{\sigma,\tau} \leq |B|^{1/2} C\left(\tau s^{-1}R^{1/s}\right) |u|_{s,R} 
	$$
	
	\noindent with
	\be
		\label{3.def.C.gevrey}
		C\left( y \right) \lesssim \frac{1}{1 - y} P\left( \frac{y}{1 - y} \right)
	\ee
	
	\noindent where $P$ is a polynomial with degree at most $ \left\lceil  (3s-1)/2 \right\rceil $, and the implicit constant depends only on the Gevrey index $s$.
\end{prop}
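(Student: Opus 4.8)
The plan is to estimate the weighted Fourier norm $|u|_{\sigma,\tau} = |e^{\tau\langle\cdot\rangle^\sigma}\widehat{u}|_{L^2}$ directly, using the spatial Gevrey bounds on $u$. The starting point is Plancherel's identity, which lets me replace the $L^2$-norm in $\xi$ by an $L^2$-norm and reduces the problem to controlling $\widehat{u}(\xi)$ pointwise with sharp dependence on $|\xi|$. Since $u$ is compactly supported on $B$ and smooth, integration by parts gives, for every multi-index $\a$, the identity $\xi^\a\,\widehat{u}(\xi) = (-i)^{|\a|}\widehat{\d_x^\a u}(\xi)$, hence the crude bound
\be
	|\xi^\a|\,|\widehat{u}(\xi)| \leq |B|^{1/2}\,|\d_x^\a u|_{L^2(B)} \leq |B|\,|\d_x^\a u|_{L^\infty(B)} \leq |B|\, C\, R^{|\a|}\,|\a|!^s,
\ee
where the last step is exactly the defining estimate \eqref{3.gevrey.x.ineq.B} of $G^s_R(B)$ (the extra factor $|B|^{1/2}$ comes from Cauchy--Schwarz on the compact support).

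First I would turn this family of monomial bounds into a single exponential decay estimate for $\widehat{u}(\xi)$. Summing the bounds over $\a$, or more precisely choosing the optimal order $|\a|$ as a function of $|\xi|$, is the technical heart of the argument. The function $|\a|!^s R^{|\a|}/|\xi^\a|$ is minimized, after passing to $|\a| = n$ and using $|\xi^\a| \sim |\xi|^n/\text{(combinatorial factor)}$, by a Stirling-type analysis: writing $n!^s \sim n^{sn}e^{-sn}$, the quantity to minimize behaves like $(n^s R/|\xi|)^n$ up to polynomial factors, whose minimum over $n$ is attained near $n \sim (|\xi|/R)^{1/s}$ and produces a decay of order $\exp(-s\,(|\xi|/R)^{1/s}) = \exp(-sR^{-1/s}|\xi|^{1/s})$. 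Since $\sigma = 1/s$, this matches the weight $e^{\tau\langle\xi\rangle^\sigma}$ precisely when $\tau < sR^{-1/s}$, and the gap between $\tau$ and $sR^{-1/s}$ is what guarantees that $e^{\tau\langle\xi\rangle^\sigma}\widehat{u}(\xi)$ is integrable in $L^2$ with the stated constant.

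The delicate part is bookkeeping the constant $C(y)$ in \eqref{3.def.C.gevrey} with the claimed structure, namely a prefactor $(1-y)^{-1}$ times a polynomial $P$ of degree $\lceil(3s-1)/2\rceil$ in $y/(1-y)$, where $y = \tau s^{-1}R^{1/s}$. To get this I would not optimize pointwise but instead sum the monomial bounds over all $\a$ with suitable weights: expand $e^{\tau\langle\xi\rangle^\sigma}$ and, after the substitution, recognize the resulting series as a geometric-type series in $y$ whose remainder after finitely many terms contributes the $(1-y)^{-1}$ singularity, while the finitely many leading terms are collected into the polynomial $P$. The degree count $\lceil(3s-1)/2\rceil$ should emerge from matching the number of $\xi$-derivatives needed (roughly $d$ plus a correction governed by the exponent $s$ in Stirling's formula) against the $L^2$-integrability threshold. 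I would organize the computation so that the $|B|^{1/2}$ factor and the $s$-dependent implicit constant are separated out cleanly, and the condition $\tau < sR^{-1/s}$ is exactly the requirement $y<1$.

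I expect the main obstacle to be the sharp constant rather than the qualitative inclusion. The inclusion $G^s_R(B)\subset\mathcal{G}^{1/s}_\tau$ is classical and follows from the exponential decay estimate alone; the real work is producing the explicit constant $C(y)$ with the correct singular factor $(1-y)^{-1}$ and the correct polynomial degree. This requires a careful, non-wasteful summation — at each step one must avoid replacing a convergent series by a crude supremum, since that would destroy the precise $(1-y)^{-1}$ blow-up as $\tau\uparrow sR^{-1/s}$. Tracking how Stirling's approximation error accumulates across the summation, and confirming that it contributes only to the polynomial $P$ and not to the singular prefactor, is where I anticipate the bulk of the careful estimation to lie.
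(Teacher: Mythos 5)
Your plan is essentially the route the paper takes: expand the weight $e^{\tau\langle\xi\rangle^{\sigma}}$ as a power series $\sum_n \frac{\tau^n}{n!}(\langle\xi\rangle^{\sigma}-1)^n$ (the paper first reduces $\langle\xi\rangle^{\sigma}-1\leq|\xi|^{\sigma}$), bound each term $\bigl\| |\xi|^{n\sigma}\widehat{u}\bigr\|_{L^2}$ by $|B|^{1/2}|u|_{s,R}R^m m!^s$ with $m=\lceil n\sigma\rceil$ via the compact support and Plancherel, and then sum the resulting series in $y=\tau s^{-1}R^{1/s}$. Your observation that one must sum the series rather than take a crude pointwise optimum in order to capture the $(1-y)^{-1}$ blow-up is exactly right, and the condition $\tau<sR^{-1/s}$ is indeed the condition $y<1$.

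There is, however, one genuine gap: your account of where the degree $\lceil(3s-1)/2\rceil$ comes from is wrong, and the computation that actually produces it is the technical heart you leave undone. The degree has nothing to do with the dimension $d$ or an ``$L^2$-integrability threshold'' in $\xi$; the $L^2$ integrability is already handled term by term through Plancherel. It comes from the purely combinatorial estimate of the ratio $m!^s/n!$ with $m=\lceil n\sigma\rceil\leq n\sigma+1$: Stirling gives
$$
	\frac{m!^s}{n!}\;\lesssim\;\left(\frac{n\sigma+1}{e}\right)^{(n\sigma+1)s}\left(\frac{n}{e}\right)^{-n}(n\sigma+1)^{s/2}\,n^{-1/2}\;\lesssim\;\sigma^{n}\,n^{(3s-1)/2},
$$
where $s\sigma=1$ is used to cancel the exponential growth, the factor $(n\sigma+1)^{s}$ comes from the mismatch $m\leq n\sigma+1$, and the remaining $n^{(s-1)/2}$ from the square-root prefactors; this is the source of both the geometric ratio $y=\sigma\tau R^{\sigma}$ and the polynomial weight $n^{(3s-1)/2}$. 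The final constant is then $\sum_{n}n^{(3s-1)/2}y^{n}$, which for $y<1$ is of the form $\frac{1}{1-y}P\!\left(\frac{y}{1-y}\right)$ with $\deg P=\lceil(3s-1)/2\rceil$ (compare with $\sum_n\binom{n+k}{k}y^n=(1-y)^{-(k+1)}$). Without carrying out this Stirling step you cannot justify either the degree of $P$ or, more importantly, the fact that the geometric ratio is exactly $\tau s^{-1}R^{1/s}$ rather than something like $\tau R^{1/s}$, which is what makes the threshold $\tau<sR^{-1/s}$ (and not the weaker $\tau<R^{-1/s}$) come out. A minor additional point: your pointwise bound $|\widehat{u}(\xi)|\leq|B|\,|\d_x^{\a}u|_{L^{\infty}}/|\xi^{\a}|$ carries the factor $|B|$ rather than $|B|^{1/2}$; to get the stated $|B|^{1/2}$ you should stay in $L^2$ throughout, as the paper does.
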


\begin{proof}

	First we write $|u|_{\sigma,\tau} = e^{\tau} \left|e^{\tau ( \langle \xi \rangle^{\sigma} -1) } \widehat{u}(\xi) \right|_{L^2}$ and there holds
	$$
		|u|_{\sigma,\tau} \leq e^{\tau} \sum_{n \geq 0} \frac{\tau^{n}}{n!} \left| \left(\langle \xi \rangle^{\sigma} -1 \right)^n \widehat{u}(\xi) \right|_{L^2} .
	$$
	
	\noindent Next we compute $	\langle \xi \rangle^{\sigma} -1 = \sigma \int_{0}^{1} \langle t \xi \rangle^{\sigma -2} t |\xi|^2 dt$ which implies, as $\sigma - 2 <0$, that
	\begin{eqnarray*}
		\langle \xi \rangle^{\sigma} -1 & \leq &  \sigma \int_{0}^{1} \left( t |\xi| \right)^{\sigma -2} t |\xi|^2 dt \\
			& \leq & |\xi|^{\sigma} \int_{0}^{1} \sigma t^{\sigma-1} dt \\
			& \leq & |\xi|^{\sigma} .
	\end{eqnarray*}
	
	\noindent Thus there holds 
	$$
		|u|_{\sigma,\tau} \leq e^{\tau} \sum_{n \geq 0} \frac{\tau^{n}}{n!} \left| |\xi|^{n\sigma} \widehat{u}(\xi) \right|_{L^2}.
	$$
	
	\noindent Let $n$ be given in the following. By the support of $u$ and inequalities \eqref{3.gevrey.x.ineq}, there holds
	$$
		\left| |\xi|^{n\sigma} \widehat{u}(\xi) \right|_{L^2} \leq |B|^{1/2} \,|u|_{s,R} R^{m} m!^{s}
	$$
	
	\noindent for any $m \geq n\sigma$. Thus
	\begin{eqnarray*}
		\frac{\tau^{n}}{n!} \left| |\xi|^{n\sigma} \widehat{u}(\xi) \right|_{L^2} & \leq & \frac{m!^{s}}{n!} |B|^{1/2} \,|u|_{s,R} \tau^{n}R^{m} .
	\end{eqnarray*}
	
	\noindent Using Stirling's formula with $m < n\sigma +1$, there is $\delta >0$ such that
	\begin{eqnarray*}
		\frac{m!^{s}}{n!} & \leq & (1+\delta)^s \frac{1}{n!} \left( \frac{n\sigma+1}{e} \right)^{(n\sigma+1)s} \left( 2\pi(n\sigma+1) \right)^{s/2} \\
			& \leq & (1+\delta)^{2s} \left( \frac{n\sigma+1}{e} \right)^{(n\sigma+1)s} \left( \frac{n}{e} \right)^{-n} \left( 2\pi(n\sigma+1) \right)^{s/2} \left( 2\pi n \right)^{-1/2} .
	\end{eqnarray*}
	
	\noindent As $s\sigma = 1$, there holds
	$$
		\left( \frac{n\sigma+1}{e} \right)^{(n\sigma+1)s} \left( \frac{n}{e} \right)^{-n} \leq \sigma^{n} (n\sigma+1)^{s}e^{1/\sigma-s} .
	$$
	
	\noindent This implies finally
	$$
		\frac{m!^{s}}{n!} \lesssim \sigma^{n} n^{(3s-1)/2}
	$$
	
	\noindent hence
	$$
		\frac{\tau^{n}}{n!} \left| |\xi|^{n\sigma} \widehat{u}(\xi) \right|_{L^2} \lesssim |B|^{1/2} \,|u|_{s,R} \,n^{(3s-1)/2}\,\left(\sigma\tau R^{\sigma} \right)^{n} .
	$$
	\noindent It now suffices to sum in $n\in\N$.
\end{proof}

We recall here some useful inequalities when dealing with Gevrey spaces $\mathcal{G}^{\sigma}_{\tau}$. 
\begin{lemma}
	\label{3.lemma.tri.ineq} 
	${}^{}$
	\begin{enumerate}
		\item Let $\sigma \in (0,1)$, $\xi$ and $\eta$ in $\R^{d}$ such that $|\xi - \eta| \leq \frac{\dsp 1}{\dsp K} |\eta|$ for some $K >1$. Then
		\be
			\label{3.ineq.tri.1}
			\left| \langle \xi \rangle^{\sigma} - \langle \eta \rangle^{\sigma} \right| \leq \left( K^{\sigma} - (K-1)^{\sigma} \right) \langle \xi - \eta \rangle^{\sigma} .
		\ee
		
		\noindent Note that $K^{\sigma} - (K-1)^{\sigma} < 1$ for any $K>1$.
		
		\item Let $\sigma \in (0,1)$, $\xi$ and $\eta$ in $\R^{d}$ such that $\frac{\dsp 1}{\dsp K}|\xi - \eta| \leq |\eta| \leq K |\xi -\eta|$ for some $K >1$. Then
		\be
			\label{3.ineq.tri.2}
			\langle \xi \rangle^{\sigma} \leq \langle \eta \rangle^{\sigma} + c' \langle \xi - \eta \rangle^{\sigma}
		\ee
		
		\noindent for some $c' \in (0,1)$ depending on $K$.
		
		\item For any $\xi\in\R^{d}$, $\sigma\in(0,1)$, $\tau >0$ and $m \geq 0$, there holds
		\be
			\label{3.ineq.poly.gevrey}
			\langle \xi \rangle^{m} \lesssim \tau^{-m/\sigma} e^{\tau \langle \xi \rangle^{\sigma}}
		\ee
	
	\end{enumerate}
\end{lemma}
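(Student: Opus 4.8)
The plan is to prove the three inequalities of Lemma \ref{3.lemma.tri.ineq} separately, as they have rather different character: the first two are ``triangle-like'' inequalities for the concave function $\langle\cdot\rangle^\sigma$ in various frequency regimes, while the third is an elementary pointwise comparison between polynomial and exponential weights.

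For part (1), the hypothesis $|\xi-\eta|\le \frac1K|\eta|$ places $\xi$ and $\eta$ in the same frequency range, with $\xi$ comparable to $\eta$. First I would use concavity of $t\mapsto t^\sigma$ (for $\sigma\in(0,1)$) together with the mean value theorem to bound the difference $|\langle\xi\rangle^\sigma-\langle\eta\rangle^\sigma|$ by $\sigma\,\langle\theta\rangle^{\sigma-1}\,\big|\langle\xi\rangle-\langle\eta\rangle\big|$ for some intermediate value, and then control $\big|\langle\xi\rangle-\langle\eta\rangle\big|\le\langle\xi-\eta\rangle$ by the ordinary triangle inequality for $\langle\cdot\rangle$. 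The extraction of the precise constant $K^\sigma-(K-1)^\sigma$, however, suggests a sharper homogeneity argument: writing $\langle\eta\rangle\ge K\langle\xi-\eta\rangle$ (up to the $+1$ in $\langle\cdot\rangle$) one compares $(K\langle\xi-\eta\rangle)^\sigma$ against $((K-1)\langle\xi-\eta\rangle)^\sigma$, and the subadditivity-type estimate $(a+b)^\sigma-a^\sigma\le$ (difference at the extreme) yields exactly the stated factor. The final remark $K^\sigma-(K-1)^\sigma<1$ is immediate from the strict concavity of $t^\sigma$ (the slope of the secant over $[K-1,K]$ is less than the slope over $[0,1]$, which is $1$).

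For part (2), the regime $\frac1K|\xi-\eta|\le|\eta|\le K|\xi-\eta|$ forces $|\eta|$ and $|\xi-\eta|$ to be comparable, so $|\xi|\le|\eta|+|\xi-\eta|\lesssim\langle\xi-\eta\rangle$ and all three brackets are mutually comparable up to the factor $K$. I would start from subadditivity $\langle\xi\rangle^\sigma\le\langle\eta\rangle^\sigma+\langle\xi-\eta\rangle^\sigma$ (which holds for $\sigma\le1$ by concavity), and then improve the coefficient of the second term from $1$ to some $c'<1$ by exploiting that in this regime $\langle\eta\rangle^\sigma$ is itself bounded below by a positive multiple of $\langle\xi-\eta\rangle^\sigma$; feeding that lower bound back into the subadditive estimate lets one trade part of $\langle\xi-\eta\rangle^\sigma$ against the available slack. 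The existence of such a $c'$ depending only on $K$ is the content here, and I would make it explicit by writing the sharp constant as a supremum of $(\langle\xi\rangle^\sigma-\langle\eta\rangle^\sigma)/\langle\xi-\eta\rangle^\sigma$ over the compact set of admissible ratios, which is strictly less than $1$ by strict concavity.

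Part (3) is the simplest: I would optimise the single-variable inequality $t^m\le C_m\,\tau^{-m/\sigma}e^{\tau t^\sigma}$ with $t=\langle\xi\rangle$. Setting $y=\tau\langle\xi\rangle^\sigma$, this reduces to $y^{m/\sigma}\lesssim e^{y}$, which holds uniformly in $y\ge0$ because $\sup_{y\ge0}y^{m/\sigma}e^{-y}<\infty$ (the maximum is attained at $y=m/\sigma$ and equals $(m/\sigma)^{m/\sigma}e^{-m/\sigma}$). Rearranging gives exactly \eqref{3.ineq.poly.gevrey} with an implicit constant depending on $m$, $\sigma$. I expect the main obstacle to be part (2): pinning down that the coefficient can genuinely be taken strictly below $1$ — rather than merely equal to $1$ as naive subadditivity gives — requires using the full two-sided comparability hypothesis and a careful concavity argument, whereas parts (1) and (3) follow from standard mean-value and optimisation estimates.
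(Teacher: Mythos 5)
Your overall strategy is sound and all three parts can be completed along the lines you describe, but your route differs from the paper's. For \eqref{3.ineq.tri.1} the paper integrates $\frac{d}{dt}\langle\eta+t(\xi-\eta)\rangle^{\sigma}$ along the segment joining $\eta$ to $\xi$ in $\R^{d}$, bounds $|\eta+t(\xi-\eta)|\geq (K-t)|\xi-\eta|$ pointwise, and evaluates $\sigma\int_{0}^{1}(K-t)^{\sigma-1}\,dt=K^{\sigma}-(K-1)^{\sigma}$ exactly; your argument instead works with the scalars $\langle\xi\rangle$, $\langle\eta\rangle$ and one-dimensional concavity of $t\mapsto t^{\sigma}$. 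For \eqref{3.ineq.tri.2} the paper splits into the cases $|\xi|\leq c\,|\xi-\eta|$ (where the prefactor is absorbed using $\langle\eta\rangle^{\sigma}\geq K^{-\sigma}\langle\xi-\eta\rangle^{\sigma}$) and $|\xi|\geq\widetilde{c}\,|\xi-\eta|$ (where it reapplies \eqref{3.ineq.tri.1}), whereas you go directly through an improved subadditivity estimate; your route is more direct and avoids the case distinction. Part (3) is treated identically (the paper omits it as trivial).

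Two steps in your write-up need repair, though both are fixable within your framework. First, in part (1) the inequality $\langle\eta\rangle\geq K\langle\xi-\eta\rangle$ that you invoke ``up to the $+1$'' is genuinely false: for $|\xi-\eta|$ small and $K$ large, $\langle\eta\rangle$ can be close to $1$ while $K\langle\xi-\eta\rangle\geq K$. The correct implementation uses $|\langle\xi\rangle-\langle\eta\rangle|\leq|\xi-\eta|$ together with $\langle\eta\rangle\geq|\eta|\geq K|\xi-\eta|$, and then the decreasing-increments property of the concave function $t\mapsto t^{\sigma}$, giving $|\langle\xi\rangle^{\sigma}-\langle\eta\rangle^{\sigma}|\leq\left((K+1)^{\sigma}-K^{\sigma}\right)|\xi-\eta|^{\sigma}\leq\left(K^{\sigma}-(K-1)^{\sigma}\right)\langle\xi-\eta\rangle^{\sigma}$ --- a slightly better constant than the one stated, so the lemma follows. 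Second, in part (2) the set of admissible configurations is not compact (the overall scale $|\xi-\eta|$ ranges over $(0,\infty)$ and $\langle\cdot\rangle$ is not homogeneous), so ``the supremum of a quantity that is everywhere $<1$ over this set is $<1$'' is not by itself a proof; replace it by the quantitative bound $\langle\xi\rangle^{\sigma}-\langle\eta\rangle^{\sigma}\leq(\langle\eta\rangle+|\xi-\eta|)^{\sigma}-\langle\eta\rangle^{\sigma}\leq\left((1+1/K)^{\sigma}-(1/K)^{\sigma}\right)|\xi-\eta|^{\sigma}$, obtained from $\langle\eta\rangle\geq|\eta|\geq K^{-1}|\xi-\eta|$ and the same decreasing-increments property; strict subadditivity of $t\mapsto t^{\sigma}$ then gives $(1+1/K)^{\sigma}-(1/K)^{\sigma}<1$, which serves as your $c'$. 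Note that only the lower bound $|\eta|\geq K^{-1}|\xi-\eta|$ is actually needed here.
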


\begin{remark}
	Note that the first point in the previous Lemma does not hold when $\sigma = 1$, i.e. in the analytic regularity.
\end{remark}

\begin{proof}
	Denote $f(t) = \langle \eta + t(\xi-\eta) \rangle^{\sigma}$. The function $f$ is differentiable on $[0,1]$, and there holds
	$$
		f(1) - f(0) = \sigma (\xi- \eta)\cdot \int_{0}^{1} (\eta + t(\xi - \eta))\langle \eta + t(\xi - \eta) \rangle^{\sigma - 2} dt.
	$$
	
	\noindent As $|\xi - \eta| \leq \frac{\dsp 1}{\dsp K} |\eta|$, there holds $ |\eta + t(\xi - \eta)| \geq (K-t) |\xi - \eta|$ hence
	\begin{eqnarray*}
		|f(1) - f(0)| & \leq & \sigma |\xi- \eta|\int_{0}^{1} \langle \eta + t(\xi - \eta) \rangle^{\sigma - 1} dt \\
			& \leq & \sigma \langle \xi- \eta \rangle^{\sigma} \int_{0}^{1} (K-t)^{\sigma-1} dt
	\end{eqnarray*}
	
	\noindent which is \eqref{3.ineq.tri.1}. We now turn to the proof of \eqref{3.ineq.tri.2}, from $|\eta| \geq K^{-1}|\xi-\eta|$ we deduce
	$$
		1 + |\eta|^2 \geq 1 + K^{-2}|\xi - \eta|^2 = K^{-2}\left( K^2 + |\xi - \eta|^2 \right) \geq K^{-2} \langle \xi - \eta \rangle^{2}
	$$
	
	\noindent where we used $K>1$. Thus, since $0 < \sigma$, 
	$$
		\langle \eta \rangle^{\sigma} \geq K^{-\sigma} \langle \xi - \eta \rangle^{\sigma}.
	$$
	
	\noindent This implies 
	$$
		\langle \eta \rangle^{\sigma} + \langle \xi - \eta \rangle^{\sigma} \geq \left(1 + K^{-\sigma} \right) \langle \xi - \eta \rangle^{\sigma}.
	$$
	
	\noindent Now assume in addition 
	\be
		\label{3.local.ineq}
		|\xi| \leq c |\xi - \eta| , \quad \text{for some } c>0.
	\ee
	
	\noindent If \eqref{3.local.ineq} holds with some $c \geq 1$, then it holds a fortiori with $c>1$. Thus we may assume \eqref{3.local.ineq} for some $c>1$, and then
	$$
		\langle \xi \rangle \leq c \langle \xi - \eta \rangle,
	$$
	
	\noindent so that 
	$$
		\langle \xi \rangle^{\sigma} \leq c^{\sigma}\langle \xi - \eta \rangle^{\sigma} \leq c^{\sigma}\left( 1 + K^{-\sigma} \right)^{-1}\left( \langle \eta \rangle^{\sigma} + \langle \xi - \eta \rangle^{\sigma} \right) .
	$$
	
	\noindent Thus we are done if \eqref{3.local.ineq} holds with
	$$
		1 < c < \left( 1 + K^{-\sigma} \right)^{1/\sigma} .
	$$
	
	\noindent Otherwise, there holds
	$$
		|\xi| \geq \left( 1 + K^{-\sigma} \right)^{1/\sigma} |\xi - \eta| =: \widetilde{c} \,|\xi-\eta|,
	$$
	
	\noindent and since $\widetilde{c} > 1$, we may then apply \eqref{3.ineq.tri.1}. This yields
	$$
		\langle \xi \rangle^{\sigma} \leq \langle \eta \rangle^{\sigma} + \left( \widetilde{c}^{\,\sigma} - (\widetilde{c} - 1)^{\sigma} \right) \langle \xi - \eta \rangle^{\sigma} 
	$$
	
	\noindent and the result follows. The proof of \eqref{3.ineq.poly.gevrey} is trivial, hence omitted.
	
\end{proof}

\begin{remark}
	Inequality \eqref{3.ineq.tri.1} is somehow similar to inequality (3.11) in {\rm \cite{bedrossian2013landau}}, which we reproduce here:
	$$
		\left| \langle \xi \rangle^{\sigma} - \langle \eta \rangle^{\sigma} \right| \leq \frac{\sigma}{(K-1)^{1-\sigma}} \langle \xi - \eta \rangle^{\sigma}
	$$
	
	\noindent Note that the coefficient $\frac{\sigma}{(K-1)^{1-\sigma}}$ may be strictly greater than $1$. Inequality \eqref{3.ineq.tri.2} is similar to inequality (3.12) in {\rm \cite{bedrossian2013landau}}, which we reproduce here:
	$$
		\langle \xi \rangle^{\sigma} \leq \left( \frac{\langle \eta \rangle^{\sigma}}{\langle \xi \rangle^{\sigma}} \right)^{1-\sigma} \left( \langle \eta \rangle^{\sigma} + \langle \xi - \eta \rangle^{\sigma} \right)
	$$
	 
	\noindent for $|\eta| \geq |\xi - \eta|$. Again, the coefficient $\left( \frac{\langle \eta \rangle^{\sigma}}{\langle \xi \rangle^{\sigma}} \right)^{1-\sigma}$ may be strictly greater than 1. 
\end{remark}


\subsection{Classes of symbols}


We define a class of symbols $a(x,\xi)$ with Gevrey regularity in the spatial variable $x$.

\begin{defi}[Class of symbols with Gevrey regularity]
	\label{3.defi.class}
	For $s\in(1,\infty)$ and $R >0$, for $m\in\R$, $\rho$ and $\delta$ such that $0 < \delta < \rho \leq 1$, we define ${\rm S}^{m}_{\rho,\delta}{\rm G}^{s}_{R}$ to be the class of symbols $a(x,\xi)$ for which there is a bounded sequence of positive numbers $C_{\a,\beta}$ such that
	\be
		\label{3.class.inequality}
		|\d_{x}^{\a} \d_{\xi}^{\beta} a(x,\xi)| \leq C_{\a,\beta}R^{|\a + \beta|} |\a|!^{s} |\beta|! \langle \xi \rangle^{m-\rho|\beta| + \delta|\a|}
	\ee
	
	\noindent uniformly in $x\in\R^{d}$ and $\xi\in\R^{d}$. We denote
	\be
		\label{3.def.semi.norms}
		|a|_{\a,\beta} = \sup_{(x,\xi)\in \R^{d}\times\R^{d}} \left| R^{-|\a + \beta|} |\a|!^{-s} |\beta|!^{-1} \langle \xi \rangle^{-m+\rho|\beta| - \delta|\a|}\d_{x}^{\a} \d_{\xi}^{\beta} a(x,\xi) \right| .
	\ee
\end{defi}

\begin{remark}
	Note that the space $G^{s}_{R}$ is naturally in ${\rm S}^{0}_{0,0}{\rm G}^{s}_{R}$, with 
	$$
		|a|_{\a,\beta} \leq |a|_{s,R} \quad , \quad \forall \,(\a,\beta) \in \Z^{d} \times \Z^{d}.
	$$
	
	\noindent Moreover spaces ${\rm S}^{m}_{\rho,\delta}{\rm G}^{s}_{R}$ are naturally embedded in $S^{m}_{\rho,\delta}$.
\end{remark}

\begin{remark}
	\label{3.remark.2}
	A way to look at inequalities \eqref{3.class.inequality} is to put together the Gevrey term $R^{|\a|}|\a|!^{s}$ and the typical pseudo-differential term $\langle \xi \rangle^{\delta |\a|}$, which means that $a(\cdot,\xi)$ is in $G^{s}_{R \langle \xi \rangle^{\delta} }$ for all $\xi\in\R^{d}$: the Gevrey radius in $x$ of the symbol decreases with $|\xi|$ if $\delta >0$. 
\end{remark}

%
%
%
%
%
%
%
%


\section{Conjugation of a Gevrey function}
\label{3.section.conjugation}


We consider the Gevrey conjugation operator of a function $F$ in ${\rm \mathcal{G}}^{\sigma}_{\tau}$ with $\tau \geq 0$, and we denote
\be
	F^{(\tau)} = e^{ \tau D^{\sigma}} F \, e^{ -\tau D^{\sigma}}
\ee

\noindent where $D = {\rm op}( \langle \cdot \rangle)$.

\begin{prop}
	\label{3.prop.DmFtau}
	Assume that $D^{m} F \in {\rm \mathcal{G}}^{\sigma}_{\tau}$ for some $m\geq0$. Then, for any $v \in H^m$, there holds
	$$
		\left| F^{(\tau)} v \right|_{H^{m}} \lesssim \left| D^{m} F \right|_{\sigma,\tau} \left| v \right|_{L^2} + \left| F \right|_{\sigma,\tau} \left| v \right|_{H^m}  .
	$$
\end{prop}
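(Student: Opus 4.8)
The plan is to work entirely on the Fourier side. Writing $F^{(\tau)} v = e^{\tau D^\sigma}\bigl(F\, e^{-\tau D^\sigma} v\bigr)$ and using that $e^{\pm\tau D^\sigma}$ is the Fourier multiplier by $e^{\pm\tau\langle\cdot\rangle^\sigma}$, one obtains
\[
	\widehat{F^{(\tau)} v}(\xi) = \int_{\R^d} \widehat{F}(\xi-\eta)\, e^{\tau(\langle\xi\rangle^\sigma - \langle\eta\rangle^\sigma)}\, \widehat{v}(\eta)\, d\eta ,
\]
so that $|F^{(\tau)} v|_{H^m} = \bigl\| \langle\xi\rangle^m \widehat{F^{(\tau)} v}\bigr\|_{L^2_\xi}$ is controlled by the $L^2_\xi$-norm of $\int \langle\xi\rangle^m |\widehat F(\xi-\eta)|\, e^{\tau(\langle\xi\rangle^\sigma-\langle\eta\rangle^\sigma)} |\widehat v(\eta)|\, d\eta$. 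I would then fix some $K>1$ (say $K=2$) and split the $\eta$-integral according to the ratio $|\eta|/|\xi-\eta|$ into three para-product regions: $R_1 = \{|\xi-\eta|\le \tfrac1K|\eta|\}$ (low frequency of $F$), $R_2 = \{|\eta|\le \tfrac1K|\xi-\eta|\}$ (low frequency of $v$), and the intermediate region $R_3 = \{\tfrac1K|\xi-\eta|\le|\eta|\le K|\xi-\eta|\}$. These cover $\R^d_\eta$ for every fixed $\xi$, and in each of them $\langle\xi\rangle$ is comparable to the larger of $\langle\eta\rangle$, $\langle\xi-\eta\rangle$, which lets me move the weight $\langle\xi\rangle^m$ onto the appropriate factor (for $m\ge 0$).

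The heart of the argument is to bound the exponential factor in each region by $e^{\tau\langle\cdot\rangle^\sigma}$ times a \emph{decaying} weight of the form $e^{-\tau(1-c)\langle\cdot\rangle^\sigma}$, using Lemma \ref{3.lemma.tri.ineq}. In $R_1$, inequality \eqref{3.ineq.tri.1} applied to the pair $(\xi,\eta)$ gives $e^{\tau(\langle\xi\rangle^\sigma-\langle\eta\rangle^\sigma)} \le e^{\tau\langle\xi-\eta\rangle^\sigma} e^{-\tau(1-c_K)\langle\xi-\eta\rangle^\sigma}$ with $c_K=K^\sigma-(K-1)^\sigma<1$, and since $\langle\xi\rangle\lesssim\langle\eta\rangle$ there I move the weight onto $\widehat v$. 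In $R_2$ the key trick is to apply \eqref{3.ineq.tri.1} instead to the pair $(\xi,\xi-\eta)$ (legitimate because $|\eta|\le\tfrac1K|\xi-\eta|$), which yields $e^{\tau(\langle\xi\rangle^\sigma-\langle\eta\rangle^\sigma)}\le e^{\tau\langle\xi-\eta\rangle^\sigma}e^{-\tau(1-c_K)\langle\eta\rangle^\sigma}$; here $\langle\xi\rangle\lesssim\langle\xi-\eta\rangle$, so the weight goes onto $\widehat F$ while the decaying factor lands on the $\eta$ variable. In $R_3$, inequality \eqref{3.ineq.tri.2} applies directly and gives $e^{\tau(\langle\xi\rangle^\sigma-\langle\eta\rangle^\sigma)}\le e^{\tau\langle\xi-\eta\rangle^\sigma}e^{-\tau(1-c')\langle\xi-\eta\rangle^\sigma}$ with $c'<1$, all frequencies being comparable.

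Each regional contribution is then a convolution of two nonnegative functions, to which I apply Young's inequality $\|g*h\|_{L^2}\le\|g\|_{L^1}\|h\|_{L^2}$. In $R_1$ I take $h(\eta)=\langle\eta\rangle^m|\widehat v(\eta)|$, with $\|h\|_{L^2}=|v|_{H^m}$, and $g(\zeta)=|\widehat F(\zeta)|e^{\tau c_K\langle\zeta\rangle^\sigma}$; Cauchy--Schwarz, together with the convergence of $\int_{\R^d} e^{-2\tau(1-c_K)\langle\zeta\rangle^\sigma}d\zeta$ (finite for $\sigma>0$, $\tau>0$), bounds $\|g\|_{L^1}\lesssim|F|_{\sigma,\tau}$, giving the term $|F|_{\sigma,\tau}|v|_{H^m}$. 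In $R_2$ and $R_3$ the weight sits on $\widehat F$, so the core factor is $g(\zeta)=\langle\zeta\rangle^m e^{\tau\langle\zeta\rangle^\sigma}|\widehat F(\zeta)|$ with $\|g\|_{L^2}=|D^m F|_{\sigma,\tau}$: for $R_2$, where the surplus decays in $\eta$, I put $h(\eta)=e^{-\tau(1-c_K)\langle\eta\rangle^\sigma}|\widehat v(\eta)|\in L^1$ with $\|h\|_{L^1}\lesssim|v|_{L^2}$ by Cauchy--Schwarz; for $R_3$, where the surplus decays in $\xi-\eta$, I absorb it into $g$, which then lies in $L^1$ with $\|g\|_{L^1}\lesssim|D^m F|_{\sigma,\tau}$, while $h=|\widehat v|\in L^2$. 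Both cases are bounded by $|D^m F|_{\sigma,\tau}|v|_{L^2}$, and summing the three pieces yields the claim. The one point requiring care---and the main obstacle---is the frequency bookkeeping in $R_2$: one must apply the sharp inequality \eqref{3.ineq.tri.1} to $(\xi,\xi-\eta)$ so that the surplus $e^{-\tau(1-c_K)\langle\eta\rangle^\sigma}$ decays in the \emph{output} frequency of $v$, which is precisely what converts $\widehat v\in L^2$ into an $L^1$ factor and forces the appearance of $|D^m F|_{\sigma,\tau}$ rather than $|F|_{\sigma,\tau}$.
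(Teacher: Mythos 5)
Your proof is correct and follows essentially the same route as the paper's: the same Fourier-side formula, the same three-region paraproduct decomposition governed by the triangle-like inequalities of Lemma \ref{3.lemma.tri.ineq}, and the same Young/Cauchy--Schwarz endgame, including the key step of applying \eqref{3.ineq.tri.1} to the pair $(\xi,\xi-\eta)$ in the region $|\eta|\le\frac1K|\xi-\eta|$. The only cosmetic difference is in the intermediate region, where you place the whole weight $\langle\xi\rangle^{m}$ on $\widehat{F}$ (valid there since all frequencies are comparable), whereas the paper splits $\langle\xi\rangle^{m}\lesssim\langle\eta\rangle^{m}+\langle\xi-\eta\rangle^{m}$ and obtains both terms; either variant works.
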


\begin{remark}
	This implies in particular that $H^{m}{\rm \mathcal{G}}^{\sigma}_{\tau}$, the space of Gevrey functions with Sobolev correction of order $m$, is an algebra for any $m \geq 0$. With $m=0$ in proposition {\rm \ref{3.prop.DmFtau}}, we see that if $F$ is in ${\rm \mathcal{G}}^{\sigma}_{\tau}$, then $F^{(\tau)}$ operates in $L^2$.
\end{remark}


\begin{proof}
	In Fourier there holds
	\be
		\label{3.local.dmtau.f}
		\mathcal{F}\left( D^{m} F^{(\tau)} v \right)(\xi) = \int_{\eta} e^{ \tau \langle \xi \rangle^{\sigma} - \tau \langle \eta \rangle^{\sigma} } \langle \xi \rangle^{m} \widehat{F}(\xi - \eta) \widehat{v}(\eta) d\eta .
	\ee
	
	\noindent We use here a paraproduct decomposition and Lemma \ref{3.lemma.tri.ineq}. Let $K>1$. We divide the integral in $\eta \in \R^{d}$ in three frequency regions, defined as $\mathcal{R}_1 = \left\{ \eta : |\xi - \eta| \leq \frac{1}{K}|\eta| \right\}$, $\mathcal{R}_2 = \left\{ \eta : |\eta| \leq \frac{1}{K} |\xi - \eta| \right\}$ and $\mathcal{R}_3 = \left\{ \eta : \frac{1}{K} |\xi - \eta| < |\eta| < K |\xi - \eta| \right\}$. We consider then each region successively:
	
	$\bullet$ The case where $|\xi - \eta| \leq \frac{1}{K}|\eta|$: thanks to inequality \eqref{3.ineq.tri.1} in Lemma \ref{3.lemma.tri.ineq}, there is $c \in (0,1)$ such that $ \langle \xi \rangle^{\sigma} - \langle \eta \rangle^{\sigma} \leq c \langle \xi - \eta \rangle^{\sigma} $, hence
		$$
			e^{ \tau \langle \xi \rangle^{\sigma} - \tau \langle \eta \rangle^{\sigma} } \leq e^{ c\tau \langle \xi - \eta \rangle^{\sigma} }.
		$$
		
		\noindent Besides, in the region under consideration, the Sobolev term satisfies $\langle \xi \rangle^{m} \lesssim \langle \eta \rangle^{m} $. This implies that
		\begin{align*}
			& \int_{\eta \in \mathcal{R}_1} e^{ \tau \langle \xi \rangle^{\sigma} - \tau \langle \eta \rangle^{\sigma} } \langle \xi \rangle^{m} \left|\widehat{F}(\xi - \eta)\right| \left|\widehat{v}(\eta)\right| d\eta \\
			& \lesssim \int_{\eta \in \mathcal{R}_1} e^{ c\tau \langle \xi - \eta \rangle^{\sigma} } \langle \eta \rangle^{m}  \left|\widehat{F}(\xi - \eta)\right| \left|\widehat{v}(\eta)\right| d\eta \\
			& \lesssim \int_{\eta \in \mathcal{R}_1} e^{ -\tau(1-c) \langle \xi - \eta \rangle^{\sigma} } \, e^{ \tau \langle \xi - \eta \rangle^{\sigma} }\left|\widehat{F}(\xi - \eta)\right| \, \langle \eta \rangle^{m}\left|\widehat{v}(\eta)\right| d\eta .
		\end{align*}
		
		\noindent We use next Young's inequality to obtain
		\begin{align*}
			& \left| \int_{\eta \in \mathcal{R}_1} e^{ -\tau(1-c) \langle \xi - \eta \rangle^{\sigma} } \, e^{ \tau \langle \xi - \eta \rangle^{\sigma} }\left|\widehat{F}(\xi - \eta)\right| \, \langle \eta \rangle^{m}\left|\widehat{v}(\eta)\right| d\eta \right|_{L^2_{\xi}} \\
			& \lesssim \left| e^{ -\tau(1-c) \langle \cdot \rangle^{\sigma} } \, e^{ \tau \langle \cdot \rangle^{\sigma} }\left|\widehat{F}(\cdot)\right| \right|_{L^1} \left| v \right|_{H^m} \\
			& \lesssim \left| e^{ -\tau(1-c) \langle \cdot \rangle^{\sigma} } \right|_{L^2} \left| F \right|_{\sigma,\tau} \left| v \right|_{H^m}
		\end{align*}
		
		\noindent using the extra Gevrey weight $ e^{ -\tau(1-c) \langle \xi - \eta \rangle^{\sigma} } $.
		
	$\bullet$ The case where $|\eta| \leq \frac{1}{K} |\xi - \eta|$: thanks to inequality \eqref{3.ineq.tri.1} in Lemma \ref{3.lemma.tri.ineq}, there is $c \in(0,1)$ such that $ \langle \xi \rangle^{\sigma} - \langle \xi - \eta \rangle^{\sigma} \leq c \langle \eta \rangle^{\sigma} $, hence
	$$
		e^{\langle \xi \rangle^{\sigma} - \langle \xi - \eta \rangle^{\sigma}} \leq e^{c \langle \eta \rangle^{\sigma}}.
	$$
	
	\noindent Besides, in the region under consideration, the Sobolev term satisfies $\langle \xi \rangle^{m} \lesssim \langle \xi - \eta \rangle^{m} $. This implies that
		\begin{align*}
			& \int_{\eta \in \mathcal{R}_2} e^{ \tau \langle \xi \rangle^{\sigma} - \tau \langle \eta \rangle^{\sigma} } \langle \xi \rangle^{m}  \left|\widehat{F}(\xi - \eta)\right| \left|\widehat{v}(\eta)\right| d\eta \\
			& \lesssim \int_{\eta \in \mathcal{R}_2} e^{\tau \langle \xi \rangle^{\sigma} - \tau \langle \eta \rangle^{\sigma} - \tau \langle \xi - \eta \rangle^{\sigma} } \langle \xi - \eta \rangle^{m}  e^{\tau \langle \xi - \eta \rangle^{\sigma}} \left|\widehat{F}(\xi - \eta)\right| \left|\widehat{v}(\eta)\right| d\eta \\
			& \lesssim \int_{\eta \in \mathcal{R}_2} e^{- \tau (1-c) \langle \eta \rangle^{\sigma} } \langle \xi -\eta \rangle^{m} \, e^{\tau \langle \xi - \eta \rangle^{\sigma}} \left|\widehat{F}(\xi - \eta)\right| \, \left|\widehat{v}(\eta)\right| d\eta .
		\end{align*}
		
		\noindent We use next Young's inequality to obtain
		\begin{align*}
			& \left| \int_{\eta \in \mathcal{R}_2} e^{- \tau (1-c) \langle \eta \rangle^{\sigma} } \langle \xi -\eta \rangle^{m} \, e^{\tau \langle \xi - \eta \rangle^{\sigma}} \left|\widehat{F}(\xi - \eta)\right| \, \left|\widehat{v}(\eta)\right| d\eta \right|_{L^2_{\xi}} \\
			& \lesssim \left| e^{ -\tau(1-c) \langle \cdot \rangle^{\sigma} } \, \left|\widehat{v}(\eta)\right| \right|_{L^1} \left| D^{m} F \right|_{\sigma,\tau} \\
			& \lesssim \left| e^{ -\tau(1-c) \langle \cdot \rangle^{\sigma} } \right|_{L^2} \left| D^{m} F \right|_{\sigma,\tau} \left| v \right|_{L^2}
		\end{align*}
		
		\noindent using the extra Gevrey weight $ e^{ -\tau(1-c) \langle \xi - \eta \rangle^{\sigma} } $.
		
	$\bullet$ The case where $\frac{1}{K} |\xi - \eta| < |\eta| < K |\xi - \eta|$: thanks to inequality \eqref{3.ineq.tri.2} in Lemma \ref{3.lemma.tri.ineq}, there is $c' \in(0,1)$ such that $ \langle \xi \rangle^{\sigma} \leq c' \langle \xi - \eta \rangle^{\sigma} + \langle \eta \rangle^{\sigma} $, hence
	$$
		e^{ \tau \langle \xi \rangle^{\sigma} - \tau \langle \eta \rangle^{\sigma} } \leq e^{c' \langle \xi - \eta \rangle^{\sigma}} .
	$$
	
	\noindent Besides, in the region under consideration, the Sobolev term satisfies $\langle \xi \rangle^{m} \lesssim \langle \eta \rangle^{m} + \langle \xi - \eta \rangle^{m} $ where the implicit constant depends on $m$, thus
		\begin{align*}
			& \int_{\eta \in \mathcal{R}_3} e^{ \tau \langle \xi \rangle^{\sigma} - \tau \langle \eta \rangle^{\sigma} } \langle \xi \rangle^{m}  \left|\widehat{F}(\xi - \eta)\right| \left|\widehat{v}(\eta)\right| d\eta \\
			& \lesssim \int_{\eta \in \mathcal{R}_3} e^{ - (1-c')\tau \langle \xi - \eta \rangle^{\sigma} } \left( \langle \eta \rangle^{m} + \langle \xi - \eta \rangle^{m} \right)  e^{\tau \langle \xi - \eta \rangle^{\sigma}} \left|\widehat{F}(\xi - \eta)\right| \left|\widehat{v}(\eta)\right| d\eta \\
			& \lesssim \int_{\eta \in \mathcal{R}_3} e^{ - (1-c')\tau \langle \xi - \eta \rangle^{\sigma} }   e^{\tau \langle \xi - \eta \rangle^{\sigma}} \left|\widehat{F}(\xi - \eta)\right| \, \langle \eta \rangle^{m} \left|\widehat{v}(\eta)\right| d\eta \\
			& \quad + \int_{\eta \in \mathcal{R}_3} e^{ - (1-c')\tau \langle \xi - \eta \rangle^{\sigma} } \langle \xi - \eta \rangle^{m}  e^{\tau \langle \xi - \eta \rangle^{\sigma}} \left|\widehat{F}(\xi - \eta)\right| \left|\widehat{v}(\eta)\right| d\eta
		\end{align*}
		
		\noindent We use next Young's inequality to obtain
		\begin{align*}
			& \left| \int_{\eta \in \mathcal{R}_3} e^{ \tau \langle \xi \rangle^{\sigma} - \tau \langle \eta \rangle^{\sigma} } \langle \xi \rangle^{m}  \left|\widehat{F}(\xi - \eta)\right| \left|\widehat{v}(\eta)\right| d\eta \right|_{L^2_{\xi}} \\
			& \lesssim \left| e^{ - (1-c')\tau \langle \cdot \rangle^{\sigma} } \right|_{L^2} \left( \left| D^{m} F \right|_{\sigma,\tau} \left| v \right|_{L^2} + \left| F \right|_{\sigma,\tau} \left| v \right|_{H^m} \right) 
		\end{align*}
		
		The result follows from \eqref{3.local.dmtau.f}, viewed as an integral over $\mathcal{R}_1 \cup\mathcal{R}_2 \cup\mathcal{R}_3$.
	
\end{proof}


\section{Action of pseudo-differential operators on Gevrey spaces}
\label{3.section.action}


In this Section, we consider symbols in ${\rm S}^{0}_{\rho,\delta}{\rm G}^{s}_{R}$ with compact support $B$ of $\R^{d}_{x}$, uniformly in $\xi\in\R^d$. This additional assumption on the support of the symbol allows to use Proposition \ref{3.prop.x.in.xi}, parlaying the spatial Gevrey regularity into a Fourier Gevrey regularity for $a(\cdot,\xi)$. We may then use an adapted paraproduct decomposition to prove the continuous action of operators with symbols in ${\rm S}^{0}_{\rho,\delta}{\rm G}^{s}_{R}$. 

In all the following we consider quantizations of the type
$$
	\op_{h}(a)u(x) = \int e^{i (x-y)\cdot\eta}a( x - h(x-y) , \eta) u(y) dy d\eta .
$$

\noindent with $h\in[0,1]$. First we prove this result in the particular case $\rho = 1$, $\delta = 0$. 

\begin{theo}[Action of $S^{0}_{1,0}G^{s}_{R}$ on $\mathcal{G}_{\tau}^{\sigma}$]
	\label{3.theo.action.0}
	Let $s\in(1,\infty)$ and $R>0$. Let $a$ be in ${\rm S}^{0}_{1,0}{\rm G}^{s}_{R}$, constant outside a compact set $B$ of $\R^{d}_{x}$, uniformly in $\xi\in\R^d$. Then for any $
		\tau < sR^{-1/s}$ and $\sigma = \frac{\dsp 1}{\dsp s} $, the operator ${\rm op}_{h}(a)$ acts continuously on ${\rm \mathcal{G}}_{\tau}^{\sigma}$ with norm
	$$
		\| {\rm op}_{h}(a) \|_{\mathcal{L}({\rm \mathcal{G}}_{\tau}^{\sigma})} \lesssim_{h} |B|^{1/2} C\left(\tau s^{-1}R^{1/s}\right) \sup_{\a\in\N^{d}} |a|_{\a,0}
	$$
	
	\noindent where $C\left(\tau s^{-1}R^{1/s}\right)$ is defined in \eqref{3.def.C.gevrey}.
\end{theo}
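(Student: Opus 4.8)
The plan is to follow the Fourier-side paraproduct strategy of Proposition \ref{3.prop.DmFtau}, with the partial Fourier transform of the symbol playing the role that $\widehat F$ played there. First I would dispose of the ``constant outside $B$'' assumption. Writing $a(x,\xi) = c(\xi) + b(x,\xi)$, where $c(\xi)$ is the ($x$-independent) value of $a$ for $x\notin B$ and $b(\cdot,\xi)$ is supported in $B$, the part $c$ gives a Fourier multiplier $\op_h(c) = c(D)$ (independent of $h$, since $c$ does not depend on $x$) with $|c(\xi)| \lesssim \sup_\a |a|_{\a,0}$ bounded, hence trivially continuous on $\mathcal{G}^\sigma_\tau$; the genuinely pseudo-differential part $b$ is compactly supported in $x$ and satisfies $\sup_\a|b|_{\a,0}\lesssim\sup_\a|a|_{\a,0}$, so I may assume from the outset that $a(\cdot,\xi)$ is supported in $B$ uniformly in $\xi$.

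Next I would compute the Fourier transform of $\op_h(a)u$. Inserting the partial Fourier transform $\widetilde a(\zeta,\xi) = \int e^{-iz\cdot\zeta} a(z,\xi)\,dz$ into the oscillatory integral and carrying out the $x$- and $y$-integrations yields, up to normalization, the paraproduct representation
\begin{equation*}
\mathcal F\big(\op_h(a)u\big)(\xi) = (2\pi)^{-d}\int_\eta \widetilde a\big(\xi-\eta,\; h\xi + (1-h)\eta\big)\,\widehat u(\eta)\,d\eta ,
\end{equation*}
the analogue of \eqref{3.local.dmtau.f} with $\widehat F(\xi-\eta)$ replaced by $\widetilde a(\xi-\eta,\, h\xi+(1-h)\eta)$. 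The crucial input is then Proposition \ref{3.prop.x.in.xi}: since $\rho = 1$, $\delta = 0$ and $m = 0$, the bound \eqref{3.class.inequality} reads $|\d_x^\a a(x,\xi)| \le C_{\a,0} R^{|\a|} |\a|!^s$ with \emph{no} dependence on $\xi$, so $a(\cdot,\theta) \in G^s_R(B)$ with $|a(\cdot,\theta)|_{s,R} \le \sup_\a |a|_{\a,0}$ \emph{uniformly} in the second slot $\theta$. Proposition \ref{3.prop.x.in.xi} then provides Gevrey decay of $\widetilde a(\cdot,\theta)$ in its first variable, with implied constant $|B|^{1/2} C(\tau s^{-1} R^{1/s}) \sup_\a |a|_{\a,0}$, again uniform in $\theta$.

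With this in hand I would weight by $e^{\tau\langle\xi\rangle^\sigma}$, split $e^{\tau\langle\xi\rangle^\sigma} = e^{\tau(\langle\xi\rangle^\sigma - \langle\eta\rangle^\sigma)} e^{\tau\langle\eta\rangle^\sigma}$ to extract $e^{\tau\langle\eta\rangle^\sigma}\widehat u(\eta)$ (whose $L^2_\eta$ norm is $|u|_{\sigma,\tau}$), and divide the $\eta$-integral into the three regions $\mathcal R_1,\mathcal R_2,\mathcal R_3$ of Proposition \ref{3.prop.DmFtau}. Lemma \ref{3.lemma.tri.ineq} transfers the remaining weight onto the symbol frequency $\xi-\eta$: inequality \eqref{3.ineq.tri.1} gives $\langle\xi\rangle^\sigma - \langle\eta\rangle^\sigma \le c\langle\xi-\eta\rangle^\sigma$ with $c<1$ on $\mathcal R_1$, and \eqref{3.ineq.tri.2} gives $\langle\xi\rangle^\sigma - \langle\eta\rangle^\sigma \le c'\langle\xi-\eta\rangle^\sigma$ with $c'<1$ on $\mathcal R_3$; in both cases the decay of $\widetilde a$ at radius $\tau$ combines with the strictly smaller coefficient to leave an integrable convolution kernel $\lesssim e^{-(1-c)\tau\langle\xi-\eta\rangle^\sigma}$, and Young's inequality closes the estimate with constant $|B|^{1/2}C(\tau s^{-1}R^{1/s})\sup_\a|a|_{\a,0}$.

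I expect the main obstacle to be twofold. First, and most importantly, the second argument $h\xi + (1-h)\eta$ of $\widetilde a$ couples $\xi$ and $\eta$ for $h \in (0,1)$, so the kernel is \emph{not} a genuine convolution kernel as it was in Proposition \ref{3.prop.DmFtau}; this is precisely what the uniformity in $\theta$ of the symbol bound (a consequence of $\delta = 0$, $m = 0$) is designed to overcome, since once $\widetilde a(\xi-\eta,\theta)$ is dominated by a function of $\xi-\eta$ alone the convolution structure, and hence Young's inequality, is restored for every $h$. Second, the intermediate region $\mathcal R_2$ (where $|\eta| \le \frac{1}{K}|\xi-\eta|$) is borderline: there \eqref{3.ineq.tri.1} only yields $\langle\xi\rangle^\sigma - \langle\eta\rangle^\sigma \le \langle\xi-\eta\rangle^\sigma - (1-c)\langle\eta\rangle^\sigma$, with coefficient exactly $1$ on $\langle\xi-\eta\rangle^\sigma$, so the symbol's decay at radius $\tau$ only just cancels the weight and leaves no room for integrability in $\xi-\eta$. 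Here I would exploit the \emph{strict} inequality $\tau < sR^{-1/s}$ to run Proposition \ref{3.prop.x.in.xi} at a slightly larger radius $\tau' \in (\tau, sR^{-1/s})$, so that the decay $e^{-\tau'\langle\xi-\eta\rangle^\sigma}$ strictly dominates $e^{\tau\langle\xi-\eta\rangle^\sigma}$ and restores an integrable kernel $e^{-(\tau'-\tau)\langle\xi-\eta\rangle^\sigma}$, at the cost of replacing $C(\tau s^{-1}R^{1/s})$ by the comparable $C(\tau' s^{-1}R^{1/s})$. Summing the three regions then gives the claimed bound.
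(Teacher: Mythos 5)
Your proposal follows essentially the same route as the paper: reduce to a symbol compactly supported in $x$, pass to the Fourier side to obtain the paraproduct representation with kernel $\widetilde a(\xi-\eta,\,h\xi+(1-h)\eta)$ (your direct computation agrees with the paper's change-of-variables/$\varphi_1,\varphi_2$ derivation), convert the spatial Gevrey regularity of $a(\cdot,\theta)$ into Fourier decay via Proposition \ref{3.prop.x.in.xi} uniformly in the second slot, and then run the three-region decomposition and Young's inequality exactly as in Proposition \ref{3.prop.DmFtau}.

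One correction: your diagnosis of the region $\mathcal R_2$ as ``borderline'' is mistaken, and the workaround you propose needlessly weakens the conclusion. In $\mathcal R_2$ one applies \eqref{3.ineq.tri.1} with the roles of $\eta$ and $\xi-\eta$ exchanged, giving $\langle\xi\rangle^\sigma-\langle\eta\rangle^\sigma-\langle\xi-\eta\rangle^\sigma\le-(1-c)\langle\eta\rangle^\sigma$; the full weight $e^{\tau\langle\xi-\eta\rangle^\sigma}$ is then absorbed into the symbol factor, which is kept in $L^2$ (that is precisely the $\mathcal G^\sigma_\tau$ norm of $\widehat a(\cdot,\theta)$ furnished by Proposition \ref{3.prop.x.in.xi}), while the $L^1$ factor needed for Young's inequality is supplied on the \emph{other} side by $e^{-(1-c)\tau\langle\eta\rangle^\sigma}|\widehat v(\eta)|$ via Cauchy--Schwarz. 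No extra decay in $\xi-\eta$ is needed, so there is no reason to run Proposition \ref{3.prop.x.in.xi} at an enlarged radius $\tau'>\tau$; doing so replaces $C(\tau s^{-1}R^{1/s})$ by $C(\tau' s^{-1}R^{1/s})$, which is \emph{not} comparable uniformly as $\tau\to sR^{-1/s}$ (the constant $C$ blows up at the endpoint), so your version would prove a strictly weaker estimate than the one stated. Finally, be aware that your phrase ``once $\widetilde a(\xi-\eta,\theta)$ is dominated by a function of $\xi-\eta$ alone'' tacitly asks for a pointwise bound, whereas Proposition \ref{3.prop.x.in.xi} only gives an $L^2_\zeta$ bound uniform in $\theta$; for $h\in(0,1)$, where the second argument moves with the integration variable, you should either upgrade to the (easy) pointwise Gevrey decay of $\widehat a(\cdot,\theta)$ or argue via Cauchy--Schwarz and Fubini as in the $h=0$ case. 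This last point is glossed over in the paper as well, so it is not held against you, but your write-up should address it.
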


\begin{proof}
	First, for fixed $\xi\in\R^d$, as $a(\cdot,\xi)$ is in ${\rm G}^{s}_{R}$ with compact support, Proposition \ref{3.prop.x.in.xi} implies that $\widehat{a}(\cdot,\xi)$, the Fourier transform with respect to $x$ of $a(\cdot,\xi)$, is in ${\rm \mathcal{G}}^{\sigma}_{\tau}$ uniformly in $\xi\in\R^{d}$, with $\sigma = 1/s$ and $\tau < sR^{-1/s}$. That is, we may write
	\be
		\label{3.local.def.F}
		\widehat{a}(\zeta,\xi) = F_{\xi}(\zeta) ,
	\ee
	
	\noindent where for fixed $\xi\in\R^d$, $F_{\xi}(\cdot)$ belongs to ${\rm \mathcal{G}}^{\sigma}_{\tau}$ with the uniform (in $\xi$) bound
	$$
		\left| F_{\xi}(\cdot) \right|_{\sigma, \tau} \leq |B|^{1/2} C\left(\tau s^{-1}R^{1/s}\right) |a(\cdot,\xi)|_{s,R} .
	$$
	
	\noindent thanks to Proposition \ref{3.prop.x.in.xi}. By definition \eqref{3.def.semi.norms} of the semi-norms in Definition \ref{3.defi.class}, there holds
	\be
		\label{3.local.bound.gevrey}
		\left| F_{\xi}(\cdot) \right|_{\sigma, \tau} \leq |B|^{1/2} C\left(\tau s^{-1}R^{1/s}\right) \sup_{\a\in\N^{d}} |a|_{\a,0} .
	\ee
	
	Let $u$ be in $G^{\sigma}_{\tau}$, and denote $v(\eta) = e^{\tau \langle \eta \rangle^{\sigma}} \hat{u}(\eta)$ which is in $L^2$. We compute the Fourier transform of ${\rm op}_{h}(p)u$. The case $h=0$ is simple, as the Fourier transform of ${\rm op}_{0}(p)u$ is
	$$
		\int_{\eta} \widehat{a}(\xi-\eta, \xi) \hat{u}(\eta) d\eta.
	$$
	
	\noindent Thus there holds
	$$
		e^{\tau \langle \xi \rangle^{\sigma}} \mathcal{F}\left( {\rm op}_{0}(a)u \right)(\xi) = \int_{\eta} e^{ \tau \langle \xi \rangle^{\sigma} - \tau \langle \eta \rangle^{\sigma}} F_{\xi}(\xi - \eta) v(\eta) d\eta .
	$$
	
	\noindent Proposition \ref{3.prop.DmFtau} now yields the result for $h=0$, since the bound \eqref{3.local.bound.gevrey} is uniform in $\xi$.

	For $h \in(0,1]$, the computation of the Fourier transform is more delicate, and there holds 
	\begin{eqnarray*}
		\mathcal{F}\left( \op_{h}(a)u \right)(\xi) & = & \int e^{-ix\cdot\xi} e^{i (x-y)\cdot\eta} a( x - h(x-y) , \eta) u(y) \, dy d\eta dx \\
			& = & \int e^{-ix\cdot\xi} e^{i (x-y)\cdot\eta}e^{iy\cdot\zeta} a( x - h(x-y) , \eta) \widehat{u}(\zeta) \, d\zeta dy d\eta dx .
	\end{eqnarray*}
	
	\noindent We define new variables, putting
	$$
		\widetilde{x} = (1-h)x + hy \quad , \quad \widetilde{y} = (1-h)x - hy
	$$
	
	\noindent which leads to
	\begin{align*}
		& \int e^{-ix\cdot\xi} e^{i (x-y)\cdot\eta}e^{iy\cdot\zeta} a( x - h(x-y) , \eta) \widehat{u}(\zeta) \, d\zeta dy d\eta dx \\
		& = \int e^{i\widetilde{y}\cdot \varphi_1(\xi,\zeta,\eta) } \, e^{i\widetilde{x} \cdot \varphi_2(\xi,\zeta,\eta) } \, a( \widetilde{x} , \eta) \widehat{u}(\zeta) \, d\zeta d\widetilde{y} d\eta d\widetilde{x} 
	\end{align*}
	
	\noindent with 
	\be
		\label{def.phi.1}
		\varphi_1(\xi,\zeta,\eta) = -\frac{1}{2(1-h)}\xi - \frac{1}{2h}\zeta + \frac{1}{2h(1-h)}\eta 
	\ee
	
	\noindent and 
	\be
		\label{def.phi.2}
		\varphi_2(\xi,\zeta,\eta) = -\frac{1}{2(1-h)}\xi + \frac{1}{2h}\zeta + \frac{2h-1}{2h(1-h)}\eta .
	\ee
	
	\noindent As the integrand depends on $\tilde{y}$ only through the phase term, there holds
	\begin{align*}
		& \int e^{i\widetilde{y}\cdot \varphi_1(\xi,\zeta,\eta) } \, e^{i\widetilde{x} \cdot \varphi_2(\xi,\zeta,\eta) } \, a( \widetilde{x} , \eta) \widehat{u}(\zeta) \, d\zeta d\widetilde{y} d\eta d\widetilde{x} \\
		& = \int_{ \{ (\zeta,\eta) \,:\, \varphi_1(\xi,\zeta,\eta) = 0 \} } \, e^{i\widetilde{x} \cdot \varphi_2(\xi,\zeta,\eta) } \, a( \widetilde{x} , \eta) \widehat{u}(\zeta) \, d\zeta d\eta d\widetilde{x} \\
		& = \int_{ \{ (\zeta,\eta) \,:\, \varphi_1(\xi,\zeta,\eta) = 0 \} } \, e^{i\widetilde{x}\cdot\left( \zeta - \eta \right)/h } \, a( \widetilde{x} , \eta) \widehat{u}(\zeta) \, d\zeta d\eta d\widetilde{x}
	\end{align*}
	
	\noindent by definition \eqref{def.phi.2} of $\varphi_2$. Hence finally
	$$
		\mathcal{F}\left( \op_{h}(a)u \right)(\xi) = \int_{ \{ (\zeta,\eta) \,:\, \varphi_1(\xi,\zeta,\eta) = 0 \} } \, \widehat{a} \left( (\zeta-\eta)/h , \eta \right) \widehat{u}(\zeta) \, d\zeta d\eta .
	$$
	
	As in the proof for $h=0$, we write 
	\begin{align*}
		& e^{\tau \langle \xi \rangle^{\sigma}} \mathcal{F}\left( \op_{h}(a)u \right)(\xi) \\
		& = \int_{ \{ (\zeta,\eta) \,:\, \varphi_1(\xi,\zeta,\eta) = 0 \} } \, e^{ \tau \langle \xi \rangle^{\sigma} - \tau \langle \zeta \rangle^{\sigma} } \, \widehat{a} \left( (\zeta-\eta)/h , \eta \right) v(\zeta) \, d\zeta d\eta .
	\end{align*}
	
	\noindent On the surface $ \{ (\zeta,\eta) \,:\, \varphi_1(\xi,\zeta,\eta) = 0 \}$ there holds
	$$
		\xi = - \frac{1-h}{h}\zeta + \frac{1}{h}\eta = \frac{1}{h}\left( \eta - \zeta \right) + \zeta		
	$$
	
	\noindent hence
	$$
		e^{\tau \langle \xi \rangle^{\sigma}} \mathcal{F}\left( \op_{h}(a)u \right)(\xi) = \int_{ \{ (\zeta,\eta) \,:\, \varphi_1(\xi,\zeta,\eta) = 0 \} } \, e^{ \tau \langle \xi \rangle^{\sigma} - \tau \langle \zeta \rangle^{\sigma} } \, F_{\eta}(\xi - \zeta) v(\zeta) \, d\zeta d\eta 
	$$
	
	\noindent with $F$ defined in \eqref{3.local.def.F}. We may then conclude in the same as the case $h=0$.
	
\end{proof}

In the general case $0 < \delta < \rho \leq 1$, Remark \ref{3.remark.2} indicates a potential obstruction for the Gevrey index. This is made precise in the following

\begin{theo}[Action of $S^{0}_{\rho,\delta}G^{s}_{R}$ on $\mathcal{G}_{\tau}^{\sigma}$]
	\label{3.theo.action.delta}
	Let $s\in(1,\infty)$, $R>0$ and $0 < \delta < \rho \leq 1$. Let $a$ be in ${\rm S}^{0}_{\rho,\delta}{\rm G}^{s}_{R}$, constant outside a compact set $B$ of $\R^{d}_{x}$, uniformly in $\xi\in\R^d$. Then for any 
	$$
		\sigma \leq (1-\delta)/s \quad \text{and} \quad \tau' < \tau  < s R^{-1/s} 
	$$
	
	\noindent the operator ${\rm op}(a)$ acts continuously from ${\rm \mathcal{G}}_{\tau}^{\sigma}$ into ${\rm \mathcal{G}}_{\tau'}^{\sigma}$ with norm
	$$
		||{\rm op}_{0}(a)||_{\mathcal{L}({\rm \mathcal{G}}_{\tau}^{\sigma} , {\rm \mathcal{G}}_{\tau'}^{\sigma})} \lesssim |B|^{1/2} C\left(\tau s^{-1}R^{1/s}\right) \sup_{\a\in\N^{d}} |a|_{\a,0} 
	$$
	
	\noindent where $C\left(\tau s^{-1}R^{1/s}\right)$ is defined in \eqref{3.def.C.gevrey}.

\end{theo}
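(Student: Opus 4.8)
The plan is to run the paraproduct argument of Theorem~\ref{3.theo.action.0}, but now keeping track of how the spatial Gevrey radius of the symbol deteriorates with the frequency; this deterioration is exactly what forces the loss $\tau'<\tau$ and the constraint $\sigma\le(1-\delta)/s$. Since the stated norm is that of $\op_0(a)$, I only treat $h=0$. As in the $h=0$ computation of Theorem~\ref{3.theo.action.0}, the Fourier transform of $\op_0(a)u$ is $\mathcal{F}(\op_0(a)u)(\xi)=\int \widehat a(\xi-\eta,\eta)\,\widehat u(\eta)\,d\eta$, where the hat on $a$ denotes the Fourier transform in the spatial variable. Writing $v(\eta)=e^{\tau\langle\eta\rangle^\sigma}\widehat u(\eta)$ and $F_\eta(\cdot)=\widehat a(\cdot,\eta)$, proving the theorem amounts to an $L^2_\xi$ bound, uniform in $u$, for $\int e^{\tau'\langle\xi\rangle^\sigma-\tau\langle\eta\rangle^\sigma}F_\eta(\xi-\eta)\,v(\eta)\,d\eta$ against $|v|_{L^2}=|u|_{\sigma,\tau}$.

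The first step transfers the spatial regularity to the Fourier side with uniform control. By Remark~\ref{3.remark.2}, for each fixed $\eta$ the function $a(\cdot,\eta)$ belongs to $G^s_{R\langle\eta\rangle^\delta}(B)$ with seminorm bounded by $\sup_\alpha|a|_{\alpha,0}$. Applying Proposition~\ref{3.prop.x.in.xi} with radius $R\langle\eta\rangle^\delta$ and choosing the Fourier-side radius $\theta_\eta=\theta_0\langle\eta\rangle^{-\delta/s}$ with $\theta_0<sR^{-1/s}$ fixed, I obtain $\big|e^{\theta_\eta\langle\cdot\rangle^{1/s}}F_\eta\big|_{L^2}\le|B|^{1/2}C(\theta_0 s^{-1}R^{1/s})\sup_\alpha|a|_{\alpha,0}$ \emph{uniformly} in $\eta$; the point of the scaling $\theta_\eta=\theta_0\langle\eta\rangle^{-\delta/s}$ is precisely that it makes the argument of $C(\cdot)$ independent of $\eta$. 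Note that this decay has Gevrey index $1/s\ge(1-\delta)/s\ge\sigma$, so $F_\eta$ always decays at least as fast as the working index $\sigma$; this surplus is what I will spend in the high-frequency region.

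The second step is the paraproduct splitting into the three regions $\mathcal{R}_1,\mathcal{R}_2,\mathcal{R}_3$ of Proposition~\ref{3.prop.DmFtau}, where I analyse the exponent $E=\tau'\langle\xi\rangle^\sigma-\tau\langle\eta\rangle^\sigma-\theta_\eta\langle\xi-\eta\rangle^{1/s}$. In $\mathcal{R}_1$ ($|\xi-\eta|\le\frac1K|\eta|$) inequality~\eqref{3.ineq.tri.1} gives $\tau'\langle\xi\rangle^\sigma-\tau\langle\eta\rangle^\sigma\le\tau'c\langle\xi-\eta\rangle^\sigma-(\tau-\tau')\langle\eta\rangle^\sigma$, and since $\langle\eta\rangle\gtrsim K\langle\xi-\eta\rangle$ the gap $\tau-\tau'$ lets me absorb $\tau'c\langle\xi-\eta\rangle^\sigma$ for $K$ large (recall $c=c(K)=K^\sigma-(K-1)^\sigma\to0$), leaving net decay $-\kappa\langle\xi-\eta\rangle^\sigma$; this is one place where the loss $\tau'<\tau$ is genuinely needed. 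In $\mathcal{R}_3$ ($\frac1K|\xi-\eta|<|\eta|<K|\xi-\eta|$) inequality~\eqref{3.ineq.tri.2} applies, $\langle\eta\rangle$ and $\langle\xi-\eta\rangle$ are comparable, and the symbol decay $\theta_\eta\langle\xi-\eta\rangle^{1/s}\approx\theta_0\langle\xi-\eta\rangle^{(1-\delta)/s}$ dominates $\tau'c'\langle\xi-\eta\rangle^\sigma$ because $(1-\delta)/s\ge\sigma$. In both regions the resulting kernel carries exponential decay in $\langle\xi-\eta\rangle$ and $\langle\eta\rangle$, and Young's/Schur's inequality closes the estimate as in Proposition~\ref{3.prop.DmFtau}.

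The crux, and the place where the hypotheses are sharp, is $\mathcal{R}_2$ ($|\eta|\le\frac1K|\xi-\eta|$), where the symbol injects the high frequencies. Here \eqref{3.ineq.tri.1} gives $\langle\xi\rangle^\sigma\le\langle\xi-\eta\rangle^\sigma+c\langle\eta\rangle^\sigma$, so $E\le\big(\tau'\langle\xi-\eta\rangle^\sigma-\theta_\eta\langle\xi-\eta\rangle^{1/s}\big)-(\tau-\tau'c)\langle\eta\rangle^\sigma$; the last term is genuine decay in $\eta$ since $\tau>\tau'>\tau'c$. For the bracket, $|\eta|\le\frac1K|\xi-\eta|$ yields $\theta_\eta=\theta_0\langle\eta\rangle^{-\delta/s}\gtrsim\theta_0K^{\delta/s}\langle\xi-\eta\rangle^{-\delta/s}$, hence $\theta_\eta\langle\xi-\eta\rangle^{1/s}\gtrsim\theta_0K^{\delta/s}\langle\xi-\eta\rangle^{(1-\delta)/s}\ge\theta_0K^{\delta/s}\langle\xi-\eta\rangle^\sigma$, so the bracket is $\le(\tau'-\theta_0K^{\delta/s})\langle\xi-\eta\rangle^\sigma$, negative once $K$ is large. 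The inequality $\langle\xi-\eta\rangle^{(1-\delta)/s}\ge\langle\xi-\eta\rangle^\sigma$ is exactly where $\sigma\le(1-\delta)/s$ is consumed, and it is sharp at the boundary $|\eta|\approx|\xi-\eta|$: there the symbol's frequency-dependent radius $\langle\eta\rangle^{-\delta/s}$ can only defeat $\langle\xi-\eta\rangle^\sigma$ when $\sigma\le(1-\delta)/s$, which is the loss of Gevrey index anticipated in Remark~\ref{3.remark.2}. The one delicate bookkeeping point I anticipate is retaining the factor $|B|^{1/2}$ rather than $|B|$: because $F_\eta$ depends on the integration variable $\eta$, the kernel is not a genuine convolution, so Young's inequality must be applied after peeling off an integrable weight and using the uniform $L^2$ bound of the first step (as in Proposition~\ref{3.prop.DmFtau}), rather than through a cruder pointwise Paley--Wiener bound on $\widehat a(\cdot,\eta)$ which would only yield $|B|$.
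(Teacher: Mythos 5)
Your proposal follows essentially the same route as the paper's own proof: the same paraproduct splitting into $\mathcal{R}_1,\mathcal{R}_2,\mathcal{R}_3$, the same use of Proposition \ref{3.prop.x.in.xi} with the frequency-dependent radius $R\langle\cdot\rangle^{\delta}$ from Remark \ref{3.remark.2} to get a uniform $L^2$ bound on the weighted Fourier transform of the symbol, and the same triangle inequalities \eqref{3.ineq.tri.1}--\eqref{3.ineq.tri.2}, with $\tau'<\tau$ absorbing the paraproduct losses and $\sigma\le(1-\delta)/s$ absorbing the shrinking radius. The only structural deviation is that you write the kernel as $\widehat a(\xi-\eta,\eta)$ where the paper uses $\widehat a(\xi-\eta,\xi)$ (so your Gevrey radius is $\tau\langle\eta\rangle^{-\delta/s}$ rather than $\tau\langle\xi\rangle^{-\delta/s}$); this is immaterial, since in each region the two weights are comparable up to $K$-dependent constants, and your convention even makes the Minkowski/Young step cleaner because the $\eta$-dependence of $F$ matches the integration variable. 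One caveat, which applies equally to the paper's own argument: in $\mathcal{R}_3$ at the endpoint $\sigma=(1-\delta)/s$ the word ``dominates'' hides a genuine constraint on constants, namely $\theta_0(1+K)^{-\delta/s}\gtrsim\tau'c'(K)$, and since $c'(K)\to1$ while $(1+K)^{-\delta/s}\to0$ as $K\to\infty$, whereas $\mathcal{R}_1$ and $\mathcal{R}_2$ push $K$ large when $\tau'$ is close to $\tau$, closing this region requires either spending the reserve decay $-(\tau-\tau')\langle\eta\rangle^{\sigma}$ quantitatively or iterating the estimate over intermediate radii; your sketch, like the paper's, leaves this bookkeeping implicit.
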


\begin{proof}

	First, for fixed $\xi\in\R^d$, as $a(\cdot,\xi)$ is in ${\rm G}^{s}_{R}$ with compact support, Proposition \ref{3.prop.x.in.xi} and Remark \ref{3.remark.2} implies that $\widehat{a}(\cdot,\xi)$, the Fourier transform with respect to $x$ of $a(\cdot,\xi)$, is in ${\rm \mathcal{G}}^{\sigma}_{\tau \langle \xi \rangle^{-\delta/s}}$ uniformly in $\xi\in\R^{d}$, with $\sigma = 1/s$ and $\tau < sR^{-1/s}$. That is, we may write
	$$
		e^{ \tau \langle \xi \rangle^{-\delta/s} \langle \zeta \rangle^{1/s} } \hat{a}(\zeta, \xi) = F_{\xi}(\zeta) ,
	$$
	
	\noindent where for fixed $\xi\in\R^d$, $F_{\xi}(\cdot)$ belongs to $L^2$ with the uniform (in $\xi$) bound
	$$
		\left| F_{\xi}(\cdot) \right|_{L^2} \leq |B|^{1/2} C\left(\tau s^{-1}R^{1/s}\right) |a(\cdot,\xi)|_{s,R} .
	$$
	
	\noindent thanks to Proposition \ref{3.prop.x.in.xi}. By definitions \eqref{3.def.semi.norms} of the semi-norms in Definition \ref{3.defi.class}, there holds
	\be
		\label{3.local.bound.gevrey.2}
		\left| F_{\xi}(\cdot) \right|_{\sigma, \tau} \leq |B|^{1/2} C\left(\tau s^{-1}R^{1/s}\right) \sup_{\a\in\N^{d}} |a|_{\a,0} .
	\ee

	Let $u$ be in ${\rm \mathcal{G}}^{\sigma}_{\tau}$. Denoting $v(\eta) = e^{\tau \langle \eta \rangle^{\sigma}} \hat{u}(\eta)$, there holds
	$$
		e^{\tau' \langle \xi \rangle^{\sigma}} \mathcal{F}\left( {\rm op}_{0}(a)u \right)(\xi) = \int_{\eta} e^{ \tau' \langle \xi \rangle^{\sigma} - \tau \langle \eta \rangle^{\sigma} - \tau \langle \xi \rangle^{-\delta/s} \langle \xi - \eta \rangle^{1/s} } F_{\xi}(\xi - \eta) v(\eta) d\eta
	$$
		
		We now decompose the integral into three regions, as in the proof of Proposition \ref{3.prop.DmFtau}. Once we derive appropriate bounds on the exponential factor
	$$
		W(\tau' ; \tau) := \exp \left( \tau' \langle \xi \rangle^{\sigma} - \tau \langle \eta \rangle^{\sigma} - \tau \langle \xi \rangle^{-\delta/s} \langle \xi - \eta \rangle^{1/s} \right)
	$$
	\noindent the result follows from \eqref{3.local.bound.gevrey.2} by application of Young's inequality, as in the proof of Proposition \ref{3.prop.DmFtau}. Thus we focus only on the above exponential factor. Here the multiplicative coefficient $K > 1$ is chosen in terms of $\tau$ and
$\tau'$ .
		
	\medskip
	
	$\bullet$ The case where $|\xi - \eta| \leq \frac{1}{K}|\eta|$: here the weight $\langle \xi \rangle^{-\delta/s}$ in the Gevrey radius of $\hat{a}(\cdot,\xi)$ is small, and 
	$$
		W(\tau';\tau) \leq \exp \left( \tau' \langle \xi \rangle^{\sigma} - \tau \langle \eta \rangle^{\sigma} \right).
	$$
	
	\noindent With \eqref{3.ineq.tri.1} there holds
	$$
		W(\tau';\tau) \leq \exp \left( \tau'\left( K^{\sigma} - (K-1)^{\sigma} \right) \langle \xi - \eta \rangle^{\sigma} -(\tau - \tau')\langle \eta \rangle^{\sigma} \right)
	$$
	
	\noindent Using now $|\xi - \eta| \leq \frac{1}{K}|\eta|$, there holds $\langle \xi - \eta \rangle^{\sigma} \leq \langle \eta \rangle^{\sigma}$, so that
	$$
		W(\tau';\tau) \leq \exp \left( -\left( \tau - (1 + \left( K^{\sigma} - (K-1)^{\sigma} \right))\tau' \right)\langle \eta \rangle^{\sigma} \right).
	$$
	
	\noindent For $K$ large enough, depending only on $\tau'$ and $\tau$, with $\tau' < \tau$, there holds $ \tau - (1 + \left( K^{\sigma} - (K-1)^{\sigma} \right))\tau' $, thus $|W|_{L^2_{\eta}} < \infty$.
			
		$\bullet$ The case where $|\eta| \leq \frac{1}{K} |\xi - \eta|$: since $|\eta| \leq K^{-1}|\xi - \eta|$, there holds $|\xi| \leq (1+ K^{-1})|\xi - \eta|$, hence $\langle \xi \rangle \leq (1 + K^{-1})\langle \xi - \eta \rangle$. Thus, with $\sigma \leq (1-\delta)/s$, we find the bound
		$$
			W(\tau' ; \tau) \leq \exp \left( \tau' \langle \xi \rangle^{\sigma} - \tau \langle \eta \rangle^{\sigma} - \tau \left(1 + 1/K \right)^{-\delta/s} \langle \xi - \eta \rangle^{(1-\delta)/s} \right) .
		$$
		
		\noindent Using inequality \eqref{3.ineq.tri.1}, this implies
		\begin{eqnarray*}
			W(\tau' ; \tau) & \leq & \exp \left( - (\tau -\left( K^{\sigma} - (K-1)^{\sigma} \right)\tau') \langle \eta \rangle^{\sigma}  \right) \\
				& & \quad \times \exp \left( - \left(\tau \left(1 + 1/K \right)^{-\delta/s} - \tau' \right) \langle \xi - \eta \rangle^{(1-\delta)/s} \right)
		\end{eqnarray*}
				
		\noindent Since $K < 1$ and $\tau > \tau'$, there holds $ \tau -\left( K^{\sigma} - (K-1)^{\sigma} \right)\tau'>0$. Thus 
		$$
			W(\tau' ; \tau) \leq \exp \left( - \left(\tau \left(1 + 1/K \right)^{-\delta/s} - \tau' \right) \langle \xi - \eta \rangle^{(1-\delta)/s} \right)
		$$
		
		\noindent and if $K$ is large enough, depending only on $\tau$ and $\tau'$, there holds $\tau \left(1 + 1/K \right)^{-\delta/s} - \tau' > 0$. Thus $|W|_{L^2_{\eta}} < \infty$.
		
		$\bullet$ The case where $\frac{1}{K} |\xi - \eta| < |\eta| < K |\xi - \eta|$: here we use inequality \eqref{3.ineq.tri.2}, which implies, since $\sigma \leq (1-\delta)/s$,
		$$
			W(\tau' ; \tau) \leq \exp \left( - (\tau - \tau') \langle \eta \rangle^{\sigma} \right) \exp \left( - \left( \tau \langle \xi \rangle^{-\delta/s} - \tau' \langle \xi - \eta \rangle^{-\delta/s} \right) \langle \xi - \eta \rangle^{1/s} \right),
		$$
		
		\noindent where $c' = c'(K) \in (0,1)$. Since $|\xi| \leq (1+ K)|\xi - \eta|$ in the region under consideration, hence $\langle \xi \rangle \leq (1=K)\langle \xi - \eta \rangle$, this implies
		$$
			W \leq \exp \left( - \left( \tau (1+K)^{-\delta/s} - \tau' \right) \langle \xi - \eta \rangle^{(1-\delta)/s} \right) 
		$$
		
		\noindent thus if $K$ is large enough, depending only on $\tau$ and $\tau'$, there holds $|W|_{L^2_{\eta}} < \infty$.
	
\end{proof}


\section{A conjugation Lemma for operators}


We consider here a symbol $a$ in ${\rm S}^{m}_{\rho,0}{\rm G}^{s}_{R}$ for $\rho \in[0,1]$, $R >0$ and $s \in (1,\infty)$, with compact support $B$ of $\R^{d}_{x}$, uniformly in $\xi\in\R^{d}$. It is known (see Lemma 7.1 in \cite{bronshtein}) that there is a symbol $\widetilde{a}$ such that
\be
	{\rm op}\left( \widetilde{a} \right) = {\rm op}(a)^{(\tau)} = e^{\tau D^{\sigma}} \op(a) e^{-\tau D^{\sigma}}
\ee

\noindent and which satisfies
\be
	\label{3.def.tilde.p}
	\widetilde{a}(x,\xi) = \int_{y,\eta} e^{-i \eta \cdot y } e^{\tau \langle \xi + \eta \rangle^{\sigma} - \tau \langle \xi \rangle^{\sigma}} a(x + y,\xi) dy d\eta .
\ee

\noindent In Proposition 2.1 in \cite{bronshtein}, the symbol $\widetilde{a}$ is proved to be in $S^{m}_{1,0}$ for small $\tau$. We extend here the result for all $|\tau| < sR^{-1/s}$, with in addition an estimate of the semi-norms of the symbol. 

\begin{lemma}
	\label{3.lemma.conjugation}
	Given $a$ in ${\rm S}^{m}_{\rho,0}{\rm G}^{s}_{R}$, for any $|\tau| < sR^{-1/s}$, the symbol defined by \eqref{3.def.tilde.p} is in ${\rm S}^{m}_{1,0}$. Moreover, for any $\und{\tau} \in (|\tau|, sR^{-1/s})$, for any $\a$, $\beta$ in $\N^{d}$ there holds
	$$
		\sup_{x \in B , \,\xi \in\R^{d}} \left| \langle \xi \rangle^{-m + |\beta|} \, \d_{x}^{\a} \d_{\xi}^{\beta} \widetilde{a}(x,\xi) \right|\, \lesssim \, |B|^{1/2} C\left(\und{\tau} s^{-1}R^{1/s}\right) \sup_{\a\in\N^{d}}|a|_{\a, \beta} \, (\und{\tau} - |\tau|)^{-(2|\beta| + |\a|)/\sigma} 
	$$
	
	\noindent where constant $C$ is defined in \eqref{3.def.C.gevrey}.
	
\end{lemma}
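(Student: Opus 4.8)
\emph{Step 1: a Fourier representation of the symbol.} The starting point is to carry out the $y$-integration in \eqref{3.def.tilde.p} so as to expose the Fourier transform in the first slot. Setting $z=x+y$ and writing $\widehat a(\eta,\xi)=\int e^{-i\eta\cdot z}a(z,\xi)\,dz$, the identity $\int_y e^{-i\eta\cdot y}a(x+y,\xi)\,dy=e^{i\eta\cdot x}\widehat a(\eta,\xi)$ turns \eqref{3.def.tilde.p} into
\be
	\widetilde a(x,\xi)=\int_\eta e^{i\eta\cdot x}\,e^{\tau\langle\xi+\eta\rangle^\sigma-\tau\langle\xi\rangle^\sigma}\,\widehat a(\eta,\xi)\,d\eta .
\ee
For each fixed $\xi$ and each $\beta_2$, the function $\d_\xi^{\beta_2}a(\cdot,\xi)$ is supported in $B$ and lies in $G^s_R$, so Proposition \ref{3.prop.x.in.xi} applies uniformly in $\xi$: for any $\und\tau<sR^{-1/s}$ the weighted transform $e^{\und\tau\langle\cdot\rangle^\sigma}\widehat{\d_\xi^{\beta_2}a}(\cdot,\xi)$ is in $L^2$, with norm $\lesssim |B|^{1/2}C(\und\tau s^{-1}R^{1/s})\langle\xi\rangle^{m-\rho|\beta_2|}R^{|\beta_2|}|\beta_2|!\,\sup_\a|a|_{\a,\beta_2}$, the power of $\langle\xi\rangle$ and the factorial coming from the semi-norms \eqref{3.def.semi.norms}. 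This is the $\xi$-dependent analogue of the Gevrey input \eqref{3.local.bound.gevrey}.

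\emph{Step 2: differentiation and Leibniz expansion.} I would then differentiate under the integral sign. Since $\d_x$ acts only on $e^{i\eta\cdot x}$, producing $(i\eta)^\a$, while $\d_\xi$ spares it, writing $\phi(\xi,\eta)=\langle\xi+\eta\rangle^\sigma-\langle\xi\rangle^\sigma$ and $g=e^{\tau\phi}$ gives
\be
	\d_x^\a\d_\xi^\beta\widetilde a=\int_\eta (i\eta)^\a e^{i\eta\cdot x}\!\!\sum_{\beta_1+\beta_2=\beta}\!\!\binom{\beta}{\beta_1}\big(\d_\xi^{\beta_1}g\big)\,\widehat{\d_\xi^{\beta_2}a}(\eta,\xi)\,d\eta .
\ee
The factor $\d_\xi^{\beta_1}g$ I would expand by Fa\`a di Bruno into sums of $\big(\prod_i\tau\,\d_\xi^{\gamma_i}\phi\big)g$ with $\sum_i\gamma_i=\beta_1$, and control it with two ingredients: the elementary bound $|\d_\xi^\gamma\langle\,\cdot\,\rangle^\sigma|\lesssim\langle\,\cdot\,\rangle^{\sigma-|\gamma|}$, and Lemma \ref{3.lemma.tri.ineq}. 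The latter yields $|\phi|\lesssim\langle\eta\rangle^\sigma$, hence $|g|\le e^{|\tau|\langle\eta\rangle^\sigma}$; and, crucially, the cancellation in the difference $\d_\xi^\gamma\langle\xi+\eta\rangle^\sigma-\d_\xi^\gamma\langle\xi\rangle^\sigma$ — examined separately on $|\eta|\le\langle\xi\rangle$ and $|\eta|>\langle\xi\rangle$, using $\langle\xi\rangle\lesssim\langle\xi+\eta\rangle\langle\eta\rangle$ — upgrades the naive $\langle\xi\rangle^{\sigma-|\gamma|}$ into a genuine gain $\langle\xi\rangle^{-|\gamma|}$ modulo harmless powers of $\langle\eta\rangle$.

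\emph{Step 3: assembling the estimate.} Taking absolute values inside the integral, bounding $|g|\le e^{|\tau|\langle\eta\rangle^\sigma}$, and applying Cauchy--Schwarz in $\eta$ against the Gevrey factor $e^{\und\tau\langle\eta\rangle^\sigma}\widehat{\d_\xi^{\beta_2}a}$ of Step~1 leaves a weight of the form $\langle\eta\rangle^{N}\,e^{-(\und\tau-|\tau|)\langle\eta\rangle^\sigma}$, with $N$ controlled linearly by $|\a|$ and $|\beta|$. This is square-integrable precisely because $\und\tau>|\tau|$, and by \eqref{3.ineq.poly.gevrey} the polynomial prefactor $\langle\eta\rangle^{N}$ is absorbed into the exponential gap at the price of a power $(\und\tau-|\tau|)^{-(2|\beta|+|\a|)/\sigma}$. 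Collecting the surviving powers $\langle\xi\rangle^{-|\beta_1|}\langle\xi\rangle^{m-\rho|\beta_2|}$, multiplying by the target weight $\langle\xi\rangle^{-m+|\beta|}$, and summing the finitely many Leibniz terms would then yield the asserted bound, with the constants $|B|^{1/2}C(\und\tau s^{-1}R^{1/s})\sup_\a|a|_{\a,\beta}$ inherited from Step~1.

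\emph{Main obstacle.} The heart of the matter is the bookkeeping of the powers of $\langle\xi\rangle$, i.e.\ showing that the conjugated symbol really gains one full negative power per $\xi$-derivative. The terms in which all derivatives land on the weight $g$ produce the clean gain $\langle\xi\rangle^{-|\beta|}$ only through the cancellation in $\langle\xi+\eta\rangle^\sigma-\langle\xi\rangle^\sigma$ supplied by Lemma \ref{3.lemma.tri.ineq}; a direct differentiation would give merely $\langle\xi\rangle^{\sigma-1}$ per derivative, which is insufficient. The terms in which some derivatives fall on $\widehat a$ inherit instead the gain $\langle\xi\rangle^{-\rho|\beta_2|}$ dictated by the class ${\rm S}^{m}_{\rho,0}{\rm G}^{s}_{R}$ of $a$, and reconciling these against the target weight $\langle\xi\rangle^{-m+|\beta|}$ — by trading surplus powers of $\langle\eta\rangle$ for powers of $\langle\xi\rangle$ inside the Gevrey decay via $\langle\xi\rangle\lesssim\langle\xi+\eta\rangle\langle\eta\rangle$, which is legitimate only on the region $|\eta|\gtrsim\langle\xi\rangle$ — is the delicate point of the whole argument. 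It is exactly this trade that generates the exponent $(2|\beta|+|\a|)/\sigma$ of the slack $\und\tau-|\tau|$, and it is where one must check that the regime $|\eta|\ll\langle\xi\rangle$ does not spoil the claimed symbol class.
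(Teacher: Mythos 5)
Your proposal follows essentially the same route as the paper: the Fourier/Leibniz representation of $\d_x^\a\d_\xi^\beta\widetilde a$, the use of Proposition \ref{3.prop.x.in.xi} to place $e^{\und\tau\langle\cdot\rangle^\sigma}\d_\xi^{\beta_2}\hat a(\cdot,\xi)$ in $L^2$ uniformly in $\xi$, the Fa\`a di Bruno bound $\bigl|\d_\xi^{\beta_1}e^{\tau(\langle\xi+\eta\rangle^\sigma-\langle\xi\rangle^\sigma)}\bigr|\lesssim\langle\xi\rangle^{-|\beta_1|}\langle\eta\rangle^{2|\beta_1|}e^{\tau(\langle\xi+\eta\rangle^\sigma-\langle\xi\rangle^\sigma)}$, and the absorption of $\langle\eta\rangle^{2|\beta_1|+|\a|}$ via \eqref{3.ineq.poly.gevrey} at the cost of $(\und\tau-|\tau|)^{-(2|\beta|+|\a|)/\sigma}$. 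The only (harmless) divergence is at the very end, where you bound the residual exponential by $e^{|\tau|\langle\eta\rangle^\sigma}$ and apply Cauchy--Schwarz directly, while the paper instead invokes the paraproduct argument from the proof of Proposition \ref{3.prop.DmFtau}; and the $\langle\xi\rangle^{-\rho|\beta_2|}$ versus $\langle\xi\rangle^{-|\beta_2|}$ bookkeeping you flag as the delicate point is in fact left equally unresolved in the paper's own proof.
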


\begin{proof}
	We compute the derivatives of the symbol $\widetilde{a}$. There holds
	\begin{align}
		& \d_{x}^{\a} \d_{\xi}^{\beta} \widetilde{a}(x,\xi) \nonumber \\
		& = \sum_{\beta_1 + \beta_2 = \beta} \binom{\beta}{\beta_1,\beta_2} \int_{y,\eta} e^{-i \eta \cdot y } \, \d_{\xi}^{\beta_1} \left( e^{\tau \langle \xi + \eta \rangle^{\sigma} - \tau \langle \xi \rangle^{\sigma}} \right) \, \d_{x}^{\a} \d_{\xi}^{\beta_2} a(x + y,\xi) \, dy d\eta \nonumber \\
		& = \sum_{\beta_1 + \beta_2 = \beta} \binom{\beta}{\beta_1,\beta_2} \int_{\eta} e^{i \eta \cdot x } \, \d_{\xi}^{\beta_1} \left( e^{\tau \langle \xi + \eta \rangle^{\sigma} - \tau \langle \xi \rangle^{\sigma}} \right) \, (i\eta)^{\a} \d_{\xi}^{\beta_2} \hat{a}(\eta,\xi) \,d\eta . \label{3.local.proof}
	\end{align}
	
	\noindent We use now the fact that $a$ is in ${\rm S}^{m}_{\rho,0}{\rm G}^{s}_{R}$ with compact support $B$ in $\R^{d}_{x}$, uniformly in $\xi$. Thanks to Proposition \ref{3.prop.x.in.xi}, we may write
	$$
		e^{ \und{\tau} \langle \eta \rangle^{\sigma}} \, \langle \xi \rangle^{-m + \rho |\beta_2|} \, \d_{\xi}^{\beta_2} \hat{a}(\eta, \xi) = F_{\xi,\beta_2}(\eta),
	$$
	\noindent where for fixed $\xi\in\R^{d}$ and $\beta_2\in\N^{d}$, $F_{\xi,\beta_2}$ is in $L^2_{\eta}$ with bound
	$$
		\left| F_{\xi,\beta_2} \right|_{L^2_{\eta}} \lesssim |B|^{1/2} C\left(\und{\tau} s^{-1}R^{1/s}\right) \sup_{\a \in \N^{d}} | a |_{\a,\beta_2} 
	$$
	
	\noindent uniformly in $\xi\in\R^d$ and $\beta_2\in\N^{d}$, and for all $\und{\tau} < sR^{-1/s}$. The semi-norms of $a$ are defined in \eqref{3.def.semi.norms}. Next, as proved in the course of Proposition 2.1 in \cite{bronshtein}, there holds 
	$$
		\left| \d_{\xi}^{\beta_1} \left( e^{\tau \langle \xi + \eta \rangle^{\sigma} - \tau \langle \xi \rangle^{\sigma}} \right) \right| \lesssim \langle \xi \rangle^{-|\beta_1|} \, \langle \eta \rangle^{2|\beta_1|} \, e^{\tau \langle \xi + \eta \rangle^{\sigma} - \tau \langle \xi \rangle^{\sigma}} .
	$$
	
	\noindent This is proved using Fa\`a di Bruno formula (see Lemma 5.1 in \cite{morisse2016lemma}) and inequality $\d_{\xi}^{\beta_1} \left( \langle \xi + \eta \rangle^{\sigma} - \langle \xi \rangle^{\sigma} \right) \lesssim \langle \xi \rangle^{-|\beta_1|} \langle \eta \rangle^{2|\beta_1|}$. The integral in \eqref{3.local.proof} satisfies thus
	\begin{align*}
		& \left| \int_{\eta} e^{i \eta \cdot x } \, \d_{\xi}^{\beta_1} \left( e^{\tau \langle \xi + \eta \rangle^{\sigma} - \tau \langle \xi \rangle^{\sigma}} \right) \, (i\eta)^{\a} \d_{\xi}^{\beta_2} \hat{a}(\eta,\xi) \,d\eta  \right| \\
		& \lesssim \int_{\eta} \, \left| \d_{\xi}^{\beta_1} \left( e^{\tau \langle \xi + \eta \rangle^{\sigma} - \tau \langle \xi \rangle^{\sigma}} \right) \right| \, |\eta|^{\a}  e^{ -\und{\tau} \langle \eta \rangle^{\sigma}} \, \langle \xi \rangle^{m-\rho|\beta_2|} \, \left| F_{\xi,\beta_2}(\eta) \right| \,d\eta \\
		& \lesssim \langle \xi \rangle^{m-|\beta_1| - \rho|\beta_2|} \, \int_{\eta} \, e^{\tau \langle \xi + \eta \rangle^{\sigma} - \tau \langle \xi \rangle^{\sigma} -\und{\tau} \langle \eta \rangle^{\sigma} } \, \langle \eta \rangle^{2|\beta_1| + |\a|}  \, \left| F_{\xi,\beta_2}(\eta) \right| \,d\eta .
	\end{align*}
	
	\noindent Next, we use inequality \eqref{3.ineq.poly.gevrey} in Lemma \ref{3.lemma.tri.ineq} to get
	$$
		\langle \eta \rangle^{2|\beta_1| + |\a|} \lesssim \, (\und{\tau} - \tau)^{ - (2|\beta_1| + |\a|)/\sigma } \,e^{ (\und{\tau} - \tau) \langle \eta \rangle^{\sigma} }
	$$
	
	\noindent hence 
	\begin{align*}
		& \int_{\eta} \, e^{\tau \langle \xi + \eta \rangle^{\sigma} - \tau \langle \xi \rangle^{\sigma} -\und{\tau} \langle \eta \rangle^{\sigma} } \, \langle \eta \rangle^{2|\beta_1| + |\a|}  \, \left| F_{\xi,\beta_2}(\eta) \right| \,d\eta \\
		& \lesssim (\und{\tau} - \tau)^{ - (2|\beta_1| + |\a|)/\sigma } \, \int_{\eta} \, e^{\tau \langle \xi + \eta \rangle^{\sigma} - \tau \langle \xi \rangle^{\sigma} - \tau \langle \eta \rangle^{\sigma} }  \, \left| F_{\xi,\beta_2}(\eta) \right| \,d\eta 
	\end{align*}
	
	\noindent and we conclude using the proof of Proposition \ref{3.prop.DmFtau}.
	
\end{proof}

We recall also the asymptotic expansion of $\widetilde{a}$, as given in Proposition 2.1 in \cite{bronshtein}.

\begin{lemma}[Asymptotic expansion of $\widetilde{a}$]
	For any $k\in\N$ there holds
	\be
		\widetilde{a}(x,\xi) = \sum_{ |\a|\leq k } \frac{i^{\a}}{\a!} \d_{x}^{\a}a(x,\xi) \left( \tau \d_{\xi} \langle \xi \rangle^{\sigma} \right)^{\a} + R
	\ee
	
	\noindent with $R$ in ${\rm S}^{\max\{m-(k+1)(1-\sigma) , m - 2 + \sigma\}}_{1,0}$.
\end{lemma}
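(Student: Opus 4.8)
The plan is to realize $\widetilde{a}$ as an oscillatory integral in the frequency variable alone and to Taylor-expand its exponential weight. Carrying out the $y$-integration in \eqref{3.def.tilde.p} exactly as in the computation \eqref{3.local.proof} of Lemma \ref{3.lemma.conjugation}, I would first write, up to the fixed normalising constant,
$$
	\widetilde{a}(x,\xi) = \int_{\eta} e^{i\eta\cdot x}\, g(\xi,\eta)\, \widehat{a}(\eta,\xi)\, d\eta , \qquad g(\xi,\eta) = e^{\tau \langle \xi + \eta \rangle^{\sigma} - \tau \langle \xi \rangle^{\sigma}} ,
$$
where $\widehat{a}(\cdot,\xi)$ denotes the Fourier transform of $a$ in its first slot. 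The observation driving the whole proof is that the claimed expansion is precisely what results from Taylor-expanding $\eta\mapsto g(\xi,\eta)$ at $\eta = 0$, because the monomial $\eta^{\gamma}$ integrated against $e^{i\eta\cdot x}\widehat{a}(\eta,\xi)$ reproduces, up to the constant $i^{\gamma}$ of the statement, the spatial derivative $\d_{x}^{\gamma} a(x,\xi)$.

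Concretely, I would write $g(\xi,\eta) = \sum_{|\gamma|\le k}\frac{\eta^{\gamma}}{\gamma !}\,\d_{\eta}^{\gamma}g(\xi,0) + r_{k+1}(\xi,\eta)$, with $r_{k+1}$ the integral Taylor remainder of order $k+1$, and substitute into the integral. The polynomial part turns into $\sum_{|\gamma|\le k}\frac{i^{\gamma}}{\gamma !}\,\d_{\eta}^{\gamma}g(\xi,0)\,\d_{x}^{\gamma} a(x,\xi)$, and it then remains to analyse the coefficients $\d_{\eta}^{\gamma}g(\xi,0)$ through the Fa\`a di Bruno formula (Lemma 5.1 in \cite{morisse2016lemma}) applied to $e^{\tau\psi}$ with $\psi(\xi,\eta) = \langle\xi+\eta\rangle^{\sigma} - \langle\xi\rangle^{\sigma}$. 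Since $\psi(\xi,0) = 0$ and $\d_{\eta}^{\beta}\psi(\xi,0) = \d_{\xi}^{\beta}\langle\xi\rangle^{\sigma}$, the partition of $\gamma$ into $|\gamma|$ singletons contributes exactly $\left(\tau\,\d_{\xi}\langle\xi\rangle^{\sigma}\right)^{\gamma}$, which assembles into the principal sum of the statement; every other partition carries at least one higher derivative $\d_{\xi}^{\beta}\langle\xi\rangle^{\sigma}$ with $|\beta|\ge 2$ and is routed into the remainder $R$.

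The remaining task is the order counting for $R$, which I would split into two independent contributions. For the Fa\`a di Bruno corrections I would use $|\d_{\xi}^{\beta}\langle\xi\rangle^{\sigma}|\lesssim\langle\xi\rangle^{\sigma-|\beta|}$ together with $a\in {\rm S}^{m}_{\rho,0}{\rm G}^{s}_{R}$, which gives $|\d_{x}^{\gamma}a|\lesssim\langle\xi\rangle^{m}$: a partition into $p\le|\gamma|-1$ blocks then scales like $\langle\xi\rangle^{m+p\sigma-|\gamma|}$, whose largest value $\langle\xi\rangle^{m-2+\sigma}$ is attained at $|\gamma| = 2$, $p = 1$, and decreases for larger $|\gamma|$. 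For the Taylor tail, the monomials $\eta^{\gamma}$ with $|\gamma| = k+1$ produce a symbol of order $\langle\xi\rangle^{m+(k+1)(\sigma-1)} = \langle\xi\rangle^{m-(k+1)(1-\sigma)}$; taking the maximum of these two orders yields the stated exponent. Throughout, each $\xi$-derivative is estimated as in Lemma \ref{3.lemma.conjugation}, gaining a full power $\langle\xi\rangle^{-1}$, so that $R$ indeed lands in ${\rm S}^{\cdot}_{1,0}$ and not merely in ${\rm S}^{\cdot}_{\rho,0}$.

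The step I expect to be the main obstacle is the rigorous control of the Taylor tail $r_{k+1}$. Because $g(\xi,\eta)$ grows in $\eta$ roughly like $e^{\tau\langle\eta\rangle^{\sigma}}$, the $\eta$-integral of $r_{k+1}\,\widehat{a}(\eta,\xi)$ converges only thanks to the Gevrey decay of $\widehat{a}(\cdot,\xi)$ furnished by Proposition \ref{3.prop.x.in.xi} (via the compact $x$-support of $a$ and the condition $\tau < sR^{-1/s}$). Closing this bound uniformly in $\xi$, and simultaneously after any number of $\xi$-differentiations, is exactly the estimate already performed in Lemma \ref{3.lemma.conjugation}, on which I would lean rather than repeat.
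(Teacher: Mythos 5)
The paper itself gives no proof of this lemma; it is recalled verbatim from Proposition 2.1 of \cite{bronshtein}. Your proposal is precisely the standard argument behind that result: write $\widetilde{a}(x,\xi)=\int e^{i\eta\cdot x}\,e^{\tau\langle\xi+\eta\rangle^{\sigma}-\tau\langle\xi\rangle^{\sigma}}\,\widehat{a}(\eta,\xi)\,d\eta$, Taylor-expand the weight in $\eta$ at $0$, identify the all-singleton Fa\`a di Bruno terms with the principal sum, and push the higher partitions (worst order $m-2+\sigma$, attained at $|\gamma|=2$, $p=1$) and the order-$(k+1)$ tail (order $m-(k+1)(1-\sigma)$) into $R$, with the tail made rigorous by the Gevrey decay of $\widehat{a}(\cdot,\xi)$ from Proposition \ref{3.prop.x.in.xi} exactly as in Lemma \ref{3.lemma.conjugation}; this is correct and consistent with the cited source. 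One caveat, which you inherit from the statement rather than introduce but which your phrase ``gaining a full power $\langle\xi\rangle^{-1}$'' glosses over: when a $\xi$-derivative of $R$ falls on the factor $\d_x^{\gamma}a$ it gains only $\langle\xi\rangle^{-\rho}$, so for $\rho<1$ the argument as written places $R$ in ${\rm S}^{\cdot}_{\rho,0}$; the ${\rm S}^{\cdot}_{1,0}$ conclusion really pertains to the case $\rho=1$ treated in \cite{bronshtein}.
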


\noindent This result is used in particular in our forthcoming papers \cite{morisse2016lemma} and \cite{morisse2016IIz}.



	\bibliographystyle{alpha}
	\bibliography{bibliographie}

\begin{thebibliography}{BMM13}

\bibitem[BMM13]{bedrossian2013landau}
Jacob Bedrossian, Nader Masmoudi, and Cl{\'e}ment Mouhot.
\newblock Landau damping: paraproducts and {G}evrey regularity.
\newblock {\em arXiv preprint arXiv:1311.2870}, 2013.

\bibitem[CNR]{bronshtein}
Ferruccio Colombini, Tatsuo Nishitani, and Jeffrey Rauch.
\newblock Weakly hyperbolic systems by symmetrization.
\newblock {\em eprint arXiv:1508.03945v2}.

\bibitem[HR01]{hua2001paradifferential}
Chen Hua and Luigi Rodino.
\newblock Paradifferential calculus in gevrey classes.
\newblock {\em Journal of Mathematics of Kyoto University}, 41(1):1--31, 2001.

\bibitem[Mor16a]{morisse2016I}
Baptiste Morisse.
\newblock On hyperbolicity and {G}evrey well-posedness. {P}art 1: the elliptic
  case.
\newblock {\em arXiv preprint arXiv:1611.07225}, 2016.

\bibitem[Mor16b]{morisse2016II}
Baptiste Morisse.
\newblock On hyperbolicity and {G}evrey well-posedness. {P}art 2: scalar or
  degenerate transitions.
\newblock {\em arXiv preprint arXiv:1611.08184}, 2016.

\bibitem[Mor17a]{morisse2016lemma}
Baptiste Morisse.
\newblock On hyperbolicity and {G}evrey well-posedness. {P}art 3: a class of
  weakly hyperbolic systems.
\newblock {\em In preparation}, 2017.

\bibitem[Mor17b]{morisse2016IIz}
Baptiste Morisse.
\newblock On hyperbolicity and {G}evrey well-posedness. {P}art 4: generic
  non-scalar transitions.
\newblock {\em In preparation}, 2017.

\bibitem[Rod93]{rodino1993linear}
Luigi Rodino.
\newblock {\em Linear partial differential operators in {G}evrey spaces}.
\newblock World Scientific, 1993.

\end{thebibliography}

\end{document}